\documentclass[11pt,a4paper]{article}

\usepackage[T1]{fontenc}
\usepackage[utf8]{inputenc}
\usepackage[english]{babel}

\usepackage[a4paper,margin=2.8cm]{geometry}
\usepackage{amsmath,amssymb,amsthm,mathtools}
\usepackage{mathrsfs}
\usepackage{graphicx}
\usepackage{xcolor}
\usepackage{hyperref}
\usepackage[nameinlink,noabbrev]{cleveref}
\usepackage{enumitem}
\usepackage{microtype}

\hypersetup{
	colorlinks=true,
	linkcolor=blue,
	citecolor=blue,
	urlcolor=blue,
	pdftitle={Asymptotics of Parking Search in Hyperfractal Networks},
	pdfauthor={Geoffrey Deperle, Christine Fricker, Philippe Jacquet, Bernard Mans, Alessia Rigonat}
}

\title{\textbf{Asymptotics of Parking Search in Hyperfractal Networks}}

\author{
	Geoffrey Deperle\thanks{INRIA, Palaiseau, France. \texttt{geoffrey.deperle@inria.fr}}
	\and
	Christine Fricker\thanks{INRIA, Paris, France. \texttt{christine.fricker@inria.fr}}
	\and
	Philippe Jacquet\thanks{INRIA, Palaiseau, France. \texttt{philippe.jacquet@inria.fr}}
	\and
	Bernard Mans\thanks{Macquarie University, Australia. \texttt{bernard.mans@mq.edu.au}}
	\and
	Alessia Rigonat\thanks{INRIA, Paris, France. \texttt{alessia.rigonat@inria.fr}}
}

\date{}

\def\E{\mathbb{E}}
\def\Var{\mathbb{V}ar}
\def\CS{{\cal S}}
\def\km{k_{\max}}
\def\tgamma{\tilde{\gamma}}
\def\bT{{\bf T}}
\newcommand{\ind}[1]{\mathbf{1}_{\{#1\}}}

\newtheorem{theorem}{Theorem}[section]
\newtheorem{lemma}[theorem]{Lemma}
\newtheorem{remark}[theorem]{Remark}
\newtheorem*{theorem*}{Theorem}

\begin{document}
	
	\maketitle
	
	\begin{abstract}
		We study the asymptotic behaviour of the distance to the first available parking slot in a recursive Manhattan street network endowed with a hyperfractal intensity structure, where slot-release events occur according to Poisson processes along the streets.
		
		We establish, by analysing the associated self-similar harmonic sums via Mellin-transform asymptotics \cite{mellin}, a power-law decay of the expected distance as the total intensity grows, with exponent equal to the inverse of the hyperfractal dimension. In particular, the scaling exponent depends only on the large-scale geometry of the network.
		
		We further prove that this exponent is robust under random multiplicative modulations of the street intensities: mild stochastic heterogeneity affects only the multiplicative constant. Similar scaling behaviour holds for the variance, the number of turns before parking, and for a jump-over variant of the search strategy.
	\end{abstract}
	\section{Introduction}
	\subsection{Motivation, Problem setting and related work}
	This work is motivated by free-floating (FF) car-sharing systems, which have developed rapidly in many cities as an alternative to private car ownership (see \cite{toy2025zero} and references therein). Unlike station-based systems, where vehicles must be returned to fixed locations, free-floating services allow cars to be parked anywhere within a designated service area (see \cite[Figure 1]{carsharingGermany}). In such systems, finding a parking space becomes a central operational issue.
	
	In~\cite{FMR-FF2025}, the problem of finding resources, both cars and parking spaces, is analyzed for FF car-sharing. In contrast with station-based models such as bike-sharing systems (see~\cite{fricker2016incentives}), the service area is partitioned into large-capacity zones where private cars, much more numerous than FF vehicles, shape parking dynamics. In this setting, the number of available parking spaces is modeled as a random quantity depending primarily on private-car arrivals and departures.
	
	To analyse parking availability, \cite{FMR-FF2025} introduces a homogeneous mean-field model in which the service area is partitioned into $N$ large-capacity cells. The parameter $N$ acts as a scaling parameter, and parking dynamics are driven by private-car arrivals and departures.
	
	In~\cite{FMR-FF2025}, we show that a phase transition separates two regimes: an {\em overloaded} regime where the number of available slots per zone is of order $1$, and an {\em underloaded} regime where the number of available slots per zone is of order $N$.  More precisely, in the overloaded case,   the conclusion is that, when $N$ becomes large (as cell capacities tend to $+\infty$), on the original time scale, the number of available parking places  (of order $1$) is random with a geometric distribution and independent of the number of FF cars. The parameter of the geometric distribution depends only on the private-car and city parameters. It is $\beta C/\alpha$, where $CN$ is the capacity of one cell, $\alpha N$ the arrival rate of private cars in a cell and $1/\beta$ the mean parking duration for private cars. Then  $\beta C/\alpha$ is the probability of finding an available slot to park within a cell, for any car, whether private or FF. However, the analysis assumes spatial homogeneity and does not incorporate the hierarchical structure of urban streets.
	
	The model presented in~\cite{FMR-FF2025} explains why the operator of a FF car-sharing system has no
	influence on the availability of parking spaces and cannot help users find an available space. But the model ignores the structure of the streets in a city, with different average parking times between large and small streets. The analysis of a model designed for this purpose that could provide an estimate of the time to find a parking slot. This is the aim of the paper.
	
	\subsection{Overview of the results}
	The starting point of the analysis is a hyperfractal construction of a Manhattan street network, where slot-release intensities are distributed across hierarchical street levels. We first describe this geometric structure and its characteristic exponent $d_F>2$, which acts as an effective dimension for the search problem.
	
	The search process is then reformulated as a recursive progression along streets of decreasing depth, leading to a self-similar harmonic representation of $\E[D(\lambda)]$. In the canonical ``jumpless'' strategy, this structure yields
	\begin{theorem*}[Scaling law]
		Let $D(\lambda)$ denote the distance to the first available parking slot in the hyperfractal Manhattan model (defined in  Section~3) with total pop-up intensity $\lambda$, and let $d_F$ be the associated hyperfractal dimension. Then, as $\lambda \to \infty$,
		\[
		\mathbb{E}[D(\lambda)] \underset{\lambda \to +\infty}{\sim} C\,\lambda^{-1/d_F},
		\qquad
		\mathrm{Var}(D(\lambda)) \underset{\lambda \to +\infty}{\sim} C'\,\lambda^{-2/d_F},
		\]
		for some constants $C,C'>0$.
	\end{theorem*}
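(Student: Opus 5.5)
We will not use the detailed geometry of Section~3 beyond what the hyperfractal construction encodes. The model has street levels $H=0,1,2,\dots$. The unit square carries a central cross of level $0$ with intensity fraction $p$. The remaining fraction $q=1-p$ is split equally among four scaled copies of half size, which is where the self-similarity comes from. Consequently a street of depth $H$ has length of order $2^{-H}$ and carries pop-up intensity $\lambda\, p\,(q/2)^H$ per unit length. This is the structure that produces $d_F=\log_2(4/q)>2$.

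The jumpless search runs along the car's current street. If no slot pops up before the intersection, the car turns onto the crossing street, which is of lower depth; the process repeats until depth $0$ is reached. Conditional on the sequence of depths $H_0>H_1>\dots$ and the run lengths $\ell_k$, the probability of surviving the $k$-th segment is $\exp(-\lambda p (q/2)^{H_k}\ell_k)$. Integrating the tail gives a representation
\[
\E[D(\lambda)]=\sum_{H\ge 0}\pi_H\, g\bigl(\lambda (q/2)^{H}\bigr),
\]
with $g$ a fixed bounded kernel: $g(x)$ is of order $1/x$ as $x\to\infty$ and bounded as $x\to0$. Here $\pi_H\propto (q/4)^H\cdot 4^H\cdot 2^{-H}$ up to constants; this is the probability that the starting point lies on a depth-$H$ street, weighted by its length, and it decays geometrically like $2^{-H d_F}\cdot$const in the appropriate normalisation. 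The recursion over successive turns only multiplies by further factors of the same geometric type, so the whole expectation is a finite combination of harmonic sums $F(\lambda)=\sum_H a^H f(\lambda b^H)$ with $0<a,b<1$.

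We then apply the Mellin transform. Writing $F^*(s)=f^*(s)/(1-a b^{-s})$ in the fundamental strip dictated by the behaviour of $f$ at $0$ and $\infty$, the rightmost relevant pole comes from $a b^{-s}=1$. It lies at $\Re s=\log a/\log b$, and by the choice of the intensity split this equals $1/d_F$. Shifting the contour to the right gives $F(\lambda)=\lambda^{-1/d_F}\,P(\log\lambda)+o(\lambda^{-1/d_F})$. Here $P$ is periodic, coming from the poles $s_k=1/d_F+2i\pi k/\log b$, and $f^*(s)$ has no further poles in the strip up to $\Re s=1$. The variance is handled the same way. $\E[D^2]$ gives a harmonic sum with exponent $2/d_F$, because the kernel scales like $x^{-2}$ and the relevant pole is at $2/d_F<1$. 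The cross term $(\E D)^2$ also gives $\lambda^{-2/d_F}$. It remains to check that the leading coefficients do not cancel, which we would do by noting that $D$ is not asymptotically deterministic at scale $\lambda^{-1/d_F}$: the depth of the first successful street stays random.

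The main obstacle is the statement with $\sim C\lambda^{-1/d_F}$ and a genuine constant, rather than a bounded oscillating factor $P(\log\lambda)$. The residues at $s_k$, $k\neq0$, are $f^*(s_k)$, and these vanish only if $f^*$ has zeros there, which typically fails for lattice-type $b=q/2$. We would first try to compute $f^*(s)$ explicitly from the exponential kernel; it is a product of Gamma factors and a rational function. We would then show that the fluctuations have tiny amplitude, since $|\Gamma|$ decays exponentially along vertical lines, and interpret $C$ as the mean of $P$. If the paper's model randomises the scale (for instance uniform starting positions and non-lattice lengths), the fluctuations genuinely vanish, and that is the route we would take to justify the strict equivalence. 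Beyond this, the technical work is the justification of the contour shift. This needs vertical decay of $f^*$, which follows from smoothness of $g$, and uniform bounds on the geometric series to exchange sum and transform.
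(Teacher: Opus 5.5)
Your overall strategy is the paper's: write $\E[D(\lambda)]$ as a harmonic sum, take its Mellin transform, and read off the first pole of $1/(1-ab^{-s})$. The gap is in the step that carries the proof, namely writing the sum and identifying $a$ and $b$. You assert it rather than derive it, and with your ingredients it does not produce $1/d_F$.

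\textbf{The correct representation.} In the jumpless model the car starts at depth $\km$ and drives through every depth. So the sum over depths runs over the segments actually traversed, not over a random starting level. The depth-$k$ segment has exponential length of mean $L2^{-k}$. Integrating \eqref{eq:meanJumpless} over the lengths gives, with $x=\lambda Lp/2$ and $\alpha=q/4$,
\[
\E[D]=\sum_{k\ge1}\frac{L}{2^k}\prod_{j\ge k}\frac{1}{1+x\alpha^j}=\sum_{k\ge1}\frac{L}{2^k}\,g(\alpha^{k-1}x),\qquad g(y)=\prod_{j\ge1}\frac{1}{1+y\alpha^j}.
\]
Three things follow from this formula.
\begin{itemize}
\item The weight is $a=1/2$, the mean segment length.
\item The scale is $b=\alpha$, i.e.\ intensity times length (the mean number of pop-ups on the segment), not the intensity $(q/2)^k$ alone.
\item The survival factor through the deeper segments makes the kernel an infinite product. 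The result is one harmonic sum, not a finite combination of sums with $1/x$ kernels.
\end{itemize}
Since $g$ decays faster than any power (the paper's lemma on $\log g$), $g^*$ is analytic on $\Re s>0$. The relevant poles are then the zeros of $1-\tfrac12\alpha^{-s}$, on the line $\Re s=\log 2/\log(4/q)=1/d_F$.

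\textbf{What your parameters give.} You take $\pi_H\propto(q/4)^H4^H2^{-H}=(q/2)^H$, which is not the $2^{-Hd_F}=(q/4)^H$ you also assert, and $b=q/2$. Then $\log a/\log b=1$, or $\log(4/q)/\log(2/q)>1$ under the other reading. The first singularity to the right is therefore at $s=1$, and the method returns $\lambda^{-1}\log\lambda$ or $\lambda^{-1}$. The phrase ``by the choice of the intensity split'' hides exactly the computation that matters.

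\textbf{The variance.} The exponent $2/d_F$ comes from squared-length weights $4^{-k}$ against the same scale $\alpha$, not from a kernel decaying like $x^{-2}$. Moreover, $\E[D^2]$ contains cross terms between distinct segments, so it is not a single harmonic sum. An upper bound $O(\lambda^{-2/d_F})$ follows from Minkowski's inequality applied to $D=\sum_i\min(E_i,|\CS_i|)\prod_{j<i}\ind{E_j>|\CS_j|}$. Non-cancellation needs an actual argument rather than the remark that $D$ is not deterministic. One option:
\begin{itemize}
\item Condition on everything except the depth-$k$ segment with $x\alpha^k\in(\alpha,1]$.
\item Use that $\min(E_k,|\CS_k|)$ is independent of $\ind{E_k>|\CS_k|}$.
\item This gives $\Var(D)\ge\prod_{i\ge1}(1+\alpha^i)^{-1}\,L^2 4^{-k}/4\asymp\lambda^{-2/d_F}$.
\end{itemize}
The paper only writes a formula for the variance constant and does not check its positivity.

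\textbf{The constant.} Your diagnosis is correct and matches what the paper actually proves. Its appendix obtains $f(x)\sim-\mathrm{Res}(f^*,1/d_F)\,x^{-1/d_F}\bigl(1+P(\log x)\bigr)$, with $P$ periodic of mean zero. So the ``$\sim C$'' of the statement must be read modulo this bounded log-periodic factor.

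Neither of your escape routes removes that factor here. The Poisson construction randomises the lengths but keeps the lattice of mean scales $2^{-k}$ and $\alpha^k$; by memorylessness, a uniform starting point changes nothing. So the poles $s_k=1/d_F+2ik\pi/\log\alpha$ persist, with residues proportional to $g^*(s_k)$. Ramanujan's $q$-beta integral gives
\[
g^*(s)=\alpha^{-s}\frac{\pi}{\sin\pi s}\frac{(\alpha^{1-s};\alpha)_\infty}{(\alpha;\alpha)_\infty},\qquad (c;\alpha)_\infty=\prod_{n\ge0}(1-c\alpha^n).
\]
Since $\alpha^{-s_k}=2$, this equals $\frac{2\pi}{\sin\pi s_k}\frac{(2\alpha;\alpha)_\infty}{(\alpha;\alpha)_\infty}\neq0$. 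The oscillation is therefore exponentially small, with relative size of order $e^{-2\pi^2/\log(1/\alpha)}$, as you guessed, but it is not zero. Declaring $C$ to be the mean of $P$ does not make $\lambda^{1/d_F}\E[D]$ converge.

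What your method (once the representation is corrected) and the paper's argument actually give is $\E[D]=C\lambda^{-1/d_F}\bigl(1+P(\log\lambda)\bigr)+o(\lambda^{-1/d_F})$. Hence $\E[D]=\Theta(\lambda^{-1/d_F})$, and $\Var(D)=\Theta(\lambda^{-2/d_F})$ by the two bounds above.
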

	showing that the scaling is governed by the hyperfractal geometry through the exponent~$d_F$.
	
	Mellin-transform asymptotics allow us to extract this behaviour from the harmonic structure and also provide the corresponding scaling of the variance. We then show that the exponent $1/d_F$ is stable under random multiplicative modulation of the level intensities, highlighting the dominant role of large-scale geometry over local fluctuations.
	
	Beyond the mean distance, we analyse the distribution of the number of turns before parking and prove a logarithmic growth in~$\lambda$ with bounded log-periodic fluctuations. Finally, we consider a jump-over strategy and indicate how the same analytic framework extends beyond the Poisson Manhattan construction to more general hyperfractal street distributions.
	
	\section{Parking strategy and algorithm}
	We study the problem of finding a free parking slot in a given network of city streets. We assume that a car seeking a parking space performs a random path formed by concatenating several street segments $\CS_1, \CS_2,\ldots,\CS_k$. When moving at constant speed $v_i$ on the segment $\CS_i$, free parking spaces appear ahead of the car as a Poisson process with intensity $\lambda_i$. To simplify, we assume that the driving speed $v$ is independent of the segment on which the car is progressing and that $\lambda_i$ is a linear density rather than a time intensity.
	
	Let $D(\CS_1,\ldots,\CS_k)$ be the average driven distance before finding a free parking slot, given the sequence of path segments. Let $|\CS|$ denote the length of a segment $\CS$. With probability $1-e^{-\lambda_1|\CS_1|}$ the first available slot appears on segment $\CS_1$. If $n$ is the number of free slots that appear during the traversal of segment $\CS_1$ the average driven distance is $|\CS_1|/(1+n)$. 

	\begin{equation}
		D(\CS_1,\CS_2,\ldots,\CS_k)=\sum_{n>0}\frac{|\CS_1|}{n+1}\frac{(\lambda_1|\CS_1|)^n}{n!}e^{-\lambda_1|\CS_1|}+e^{-\lambda_1|\CS_1|}\left(|\CS_1|+D(\CS_2,\ldots,\CS_k)\right).
		\label{eq1}
	\end{equation}
	
	Then, by straightforward calculation, 
	\begin{align}\label{eq2}
		D(\CS_1,\CS_2,\ldots,\CS_k)=\frac{1-e^{-\lambda_1|\CS_1|}}{\lambda_1}+e^{-\lambda_1|\CS_1|} D(\CS_2,\ldots,\CS_k). 
	\end{align}
	which yields, by induction,
	
	\begin{equation}\label{eq:meanJumpless}
		D(\CS_1,\ldots,\CS_k)=\sum_{i=1}^{k}\frac{1-e^{-\lambda_i|\CS_i|}}{\lambda_i}\prod_{j<i}e^{-\lambda_j |\CS_j|}.
	\end{equation}
	
	\begin{remark}
		The quantity defined in~\eqref{eq:meanJumpless} corresponds to a prescribed sequence of segments.
		One may also consider an optimal strategy, where the next segment is chosen among those accessible from the current one. In that case, the expected distance satisfies a dynamic programming relation of the form
		\[
		V(\CS_1)=\frac{1-e^{-\lambda_1|\CS_1|}}{\lambda_1}
		+e^{-\lambda_1|\CS_1|}\min_{\CS_2} V(\CS_2),
		\]
		where $\CS_2$ ranges over the segments reachable from $\CS_1$.
	\end{remark}

	\section{Hyperfractal geometry}
	The motivation for the hyperfractal geometry comes from the well-known hierarchical organisation of urban street networks, where major roads, arterial streets and residential streets play different structural roles. \cite{batty1994fractal}
	
	The first observation is that in a city the busiest streets are interleaved with less busy ones, typically narrower streets. In a sense, lower-traffic streets form a dense pattern at any scale on a city map. The second observation is that the street pattern is often reminiscent of a fractal structure. \cite{hyperfractal}
	
	In our description, we assume that the free-slot pop-up rates $\lambda_i$ are distributed according to a hyperfractal measure on a Manhattan street network. Indeed, we assume that the streets are of various types: high-traffic roads, arterial streets, regular streets, residential streets. 
	
	As most drivers are well aware, streets, regardless of their status, are constantly congested with parked cars. The only thing that may vary depending on the status of the street is the frequency with which free parking spaces appear or disappear. On a residential street, most parking spaces are occupied by residents who may not commute frequently. At the other end of the spectrum, busy streets border major shopping centres, and parking spaces can actually experience a much higher turnover rate.

	We consider a city map with a Manhattan layout with North-South streets and East-West streets within the unit square. The streets are the support of a measure $\mu$. In a hyperfractal structure, streets are ranked according to a depth index. The depth index varies between 0 and $\km$. 
	
	\subsection{A deterministic Manhattan hyperfractal construction}
	We describe a deterministic recursive construction of a measure $\mu$ supported on a Manhattan-type street grid.
	
	At level $0$, the unit square contains the central cross formed by one vertical and one horizontal street; each of these two streets carries mass $p/2$ with $p \in (0,1)$, uniformly distributed along the segment. 
	
	At level $k \ge 1$, we add $2^k$ vertical streets and $2^k$ horizontal streets, placed at equally spaced dyadic positions parallel to the central cross. Each street at level $k$ carries mass 
	$\frac{p}{2}\left(\frac{q}{2}\right)^k$ with $q = 1-p$ uniformly distributed along its length.
	
	Thus, the number of streets grows geometrically while their individual mass decreases geometrically. When the construction is iterated up to depth $k_{\max}$, we obtain a weighted Manhattan grid; when $k_{\max}=\infty$, this yields the hyperfractal Manhattan model (Figure~\ref{fig0}).
	\begin{figure}[!h]
		\centering
		\includegraphics[width=3.1cm]{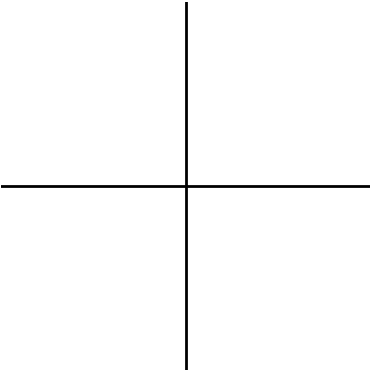}
		\includegraphics[width=3.1cm]{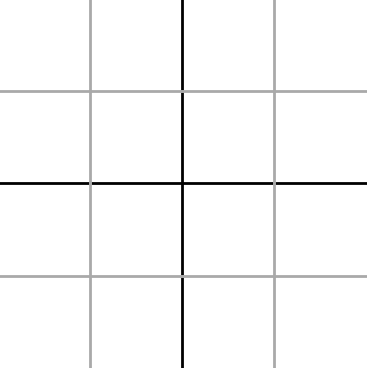}
		\includegraphics[width=3.1cm]{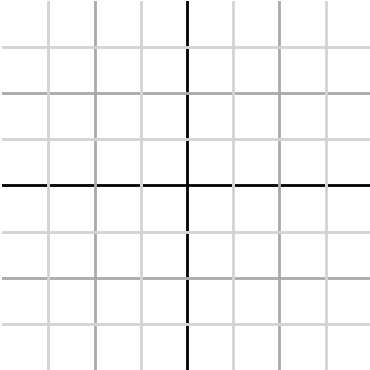}
		\includegraphics[width=3.1cm]{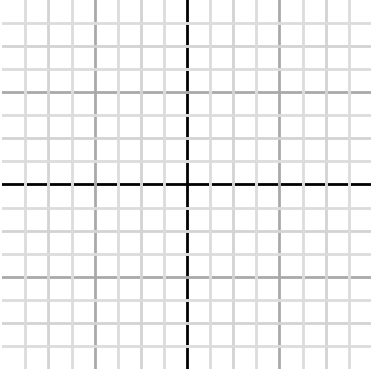}
		\caption{Recursive construction of the Manhattan model}
		\label{fig0}\end{figure}
	
	The result of the infinite process is a probability measure on the unit square whose support is the set of streets. The measure has a fractal nature since it is defined via a repeated pattern on the successive quadrants. Since each quadrant with half the side length of the original square receives a fraction $q/4$ of the total mass, the fractal dimension satisfies the identity
	\[
	\left(\frac{1}{2}\right)^{d_F}=\frac{q}{4} \quad \text{or, equivalently,} \ \ \ \ \ \ d_F=\frac{\log(4/q)}{\log 2}.
	\]
	We observe that the quantity $d_F$ is greater than the Euclidean dimension 2 of the city map. We therefore call $\mu$ a hyperfractal measure. Indeed, fractal objects are always of dimension smaller than the Euclidean dimension. This is mainly because  fractal objects studied in the literature were originally subsets of $\mathbb{R}^2$. But this may no longer be true when subsets are replaced by measures.
	
	As a consequence of the recursive construction, the density of $\mu$ on a street of rank $k$ is uniform and equal to $\mu_k=\frac{p}{2}(\frac{q}{2})^k$.
	
	\subsection{A Poisson Manhattan hyperfractal measure}\label{sec:PoissonManhattan}
	The $x$-coordinates of the north-south streets are distributed according to a Poisson process with rate based on the depth index $k$, i.e. $2^k$. Similarly, the coordinates of east-west streets are generated by an independent Poisson process with the same depth-dependent rate $2^k$. Figure~\ref{fig2} shows an example of Poisson street distribution.
	
	\begin{figure}[h]
		\centering
		\includegraphics[width=3.1cm]{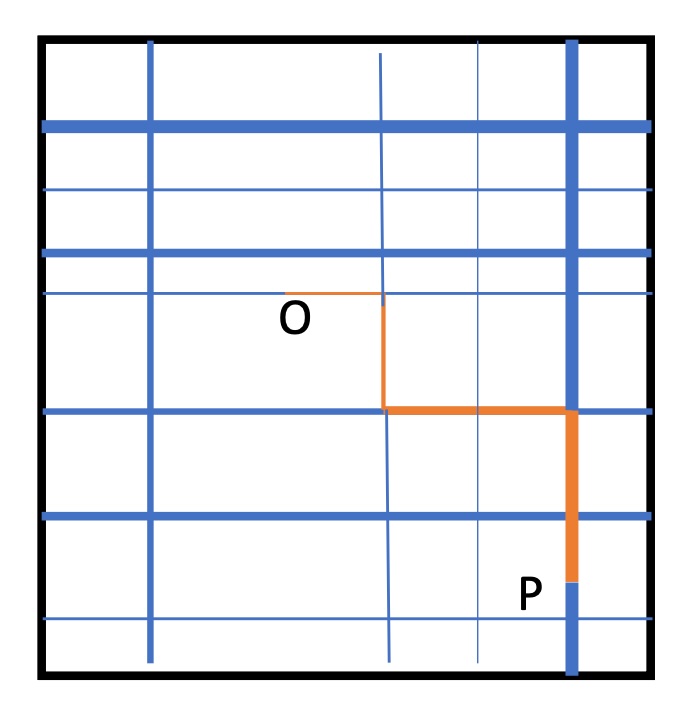}
		\caption{Poisson generation of streets, color intensity indicates the intensity of pop-up rates. Between point O and point P a non backtracking East/South search}
		\label{fig2}\end{figure}
	
	In a street of depth $k$ the parking pop-up rate is arbitrarily set to $\lambda_k=\mu_k\lambda$ (for the binary hyperfractal structure~\cite{hyperfractal}) where $\lambda$ is the total parking pop-up rate aggregated over the city. The hyperfractal dimension is $d_F=\frac{\log(4/q)}{\log 2}$ and is larger than 2. In the binary hyperfractal map, the streets are organized in a Manhattan pattern with an average of $2^{\km}$ North-South streets and $2^{\km}$ East-West streets. We observe that the streets with lower popup rates tend to be dense in the unit square when $\km$ increases.

	\section{The analysis}
	\subsection{The steady jumpless model}
	Since the streets are distributed according to a Poisson process (see Section~\ref{sec:PoissonManhattan}), we denote by $\alpha_i$ the rate parameter of the segment length on $\CS_i$, so that $|\CS_i|$ is exponentially distributed with mean $1/\alpha_i$. Integrating on segment lengths $|\CS_1|,\ldots,|\CS_k|$ in~\eqref{eq:meanJumpless}, it holds that
	\begin{align}\label{mean_Jumpless2}
		\E(D(\CS_1,\ldots,\CS_k))=\sum_{i=1}^{k}\frac{1/\alpha_i}{1+\lambda_i/\alpha_i}\prod_{j<i}\frac{1}{1+\lambda_j/\alpha_j}.  
	\end{align}
	
	We analyse a route in which a car starts on a street at level $\km$ and, if unsuccessful, turns onto the first street at level $\km-1$. The car will turn onto streets of strictly decreasing depth, $\km$, $\km-1$, $\km-2$, {\it etc},
	until it finds a free slot. In this model, we require that the depth of the segment be strictly decremented by one unit at each turn. A non backtracking search is when the car always increase its distance with the origin, by alternating East and South turns for example as depicted by figure~\ref{fig2}. That way the segments are independent. However this strategy is sub-optimal since a street with a much lower depth may be crossed before turning onto a street with a depth decreased by exactly one unit. Allowing the depth to jump by more than one unit would reduce the expected distance since the rate of free slot occurrence increases when depth decreases. This option is described and analyzed in Section~\ref{sec:jump}.
	
	Therefore, taking $k=\km+1$  segments in ~\eqref{mean_Jumpless2}, $\CS_1$ with $\lambda_1=\mu_{\km}\lambda$ and depth $\km$,  \ldots, $\CS_{\km+1}$ with $\lambda_{\km+1}=\mu_{0}\lambda$ and depth $0$, i.e. for $i=1,\ldots,\km+1$,$\lambda_i=\mu_{\km-i+1}\lambda \text{ and }   1/\alpha_i= 2^{i-\km-1}L,\quad \text{with } \mu_k=p/2(q/2)^k $
	in fact $2^{i-\km-1}L$ where $L$ is the typical length of a full segment if one wishes to depart from the unit square model, we finally obtain 
	\begin{align}\label{eq:int_mean}
		\mathbb{E}[D(\CS_1,\ldots,\CS_{\km+1})]=
		\sum_{k=1}^{\km}\frac{L}{2^{k}}\prod_{j\geq k}\frac{1}{1+\lambda L 2^{-j}(p/2)(q/2)^j}.
	\end{align}
	
	Let us define the auxiliary function $g$ on the real positive set  by $g(x)=\prod_{j= 1}^\infty\frac{1}{1+x 2^{-j}(q/2)^j}$ and function $f$ by
	\begin{align}\label{eq:deff}
		f(x)=\sum_{k=1}^\infty\frac{L}{2^{k} }g(\alpha^k x),    
	\end{align} 
	then, assuming $\km=\infty$ and $\alpha=q/4$, we obtain 
	\begin{align}\label{eq:f}
		\E[D(\CS_1,\ldots,\CS_{k},\ldots)]=f(\lambda Lp/(2\alpha)).  
	\end{align}  
	
	\begin{lemma}
		The function $g(x)$ behaves like
		$
		x^{\frac{\log x}{2\log\alpha}}\sqrt{x}
		$
		when $x\to\infty$.\end{lemma}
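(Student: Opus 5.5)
The plan is to work with $\log g$ and use the Mellin transform, consistent with the harmonic-sum approach announced in the introduction. Since $2^{-j}(q/2)^j=(q/4)^j=\alpha^j$, we have
\[
-\log g(x)=\sum_{j\ge 1}\log\bigl(1+x\alpha^j\bigr),
\]
which is a harmonic sum with base function $\log(1+x)$, amplitudes $1$ and frequencies $\alpha^j$. The claimed behaviour is
\[
\log g(x)=\frac{\log^2 x}{2\log\alpha}+\frac12\log x+O(1).
\]
So the statement should be read as an asymptotic equivalence \emph{up to a bounded multiplicative factor}. That factor is a constant times a log-periodic oscillation of small amplitude, and I will state this explicitly.

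First, I will recall that the Mellin transform of $\log(1+x)$ is $\frac{\pi}{s\sin \pi s}$ on the fundamental strip $-1<\Re s<0$. By the separation property of harmonic sums, the Mellin transform of $-\log g$ on the same strip is
\[
G^*(s)=\frac{\pi}{s\sin\pi s}\sum_{j\ge1}\alpha^{-js}=\frac{\pi}{s\sin\pi s}\cdot\frac{\alpha^{-s}}{1-\alpha^{-s}}.
\]
The geometric series converges because $|\alpha^{-s}|=\alpha^{-\Re s}<1$ when $\Re s<0$ and $\alpha<1$. Next, for $x\to\infty$, I will shift the inversion contour to the right, to $\Re s=1-\varepsilon$. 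The poles crossed are:
\begin{itemize}
\item a triple pole at $s=0$, coming from $1/s$, $1/\sin\pi s$ and $1/(1-\alpha^{-s})$;
\item simple poles at $s_k=2\pi i k/\log(1/\alpha)$, $k\neq0$.
\end{itemize}
Each pole contributes minus its residue of $G^*(s)x^{-s}$.

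The key local computation is at $s=0$. Write $\ell=\log(1/\alpha)$. Then
\[
\frac{\pi}{s\sin\pi s}=\frac{1}{s^2}\bigl(1+O(s^2)\bigr),\qquad \frac{\alpha^{-s}}{1-\alpha^{-s}}=-\Bigl(\frac{1}{s\ell}+\frac12+O(s)\Bigr),\qquad x^{-s}=1-s\log x+\frac{s^2\log^2x}{2}-\cdots.
\]
Extracting the coefficient of $s^{-1}$ gives the contribution $\frac{\log^2 x}{2\ell}-\frac12\log x+c_0$ to $-\log g(x)$, for some constant $c_0$. This yields exactly $\log g(x)=\frac{\log^2x}{2\log\alpha}+\frac12\log x-c_0+\cdots$, which is the exponent $x^{\log x/(2\log\alpha)}\sqrt{x}$.

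The poles $s_k$ give a term $P(\log x)$, where $P$ is a periodic function of period $\ell$. Its Fourier coefficients are $\frac{\pi}{s_k\sin\pi s_k}\cdot\frac{1}{\ell}$. These decay like $e^{-\pi|\Im s_k|}/|s_k|$, so $P$ is bounded, and in fact tiny. The remaining integral on $\Re s=1-\varepsilon$ is $O(x^{-1+\varepsilon})$.

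The main obstacle is justifying the contour shift. This requires $G^*(s)$ to decay along vertical lines. On those lines, $1/\sin\pi s$ decays exponentially in $|\Im s|$, while $\alpha^{-s}/(1-\alpha^{-s})$ stays bounded away from the poles $s_k$. I will therefore shift along horizontal segments chosen at heights midway between consecutive $s_k$, where this factor is uniformly bounded. I will also check that no other singularities enter the strip $0\le\Re s<1$: the zeros of $\sin\pi s$ there are only at $s=0$, and $1-\alpha^{-s}$ vanishes only at the $s_k$. Exponentiating then gives $g(x)=C\,(1+o(1))\,e^{-P(\log x)}\,x^{\frac{\log x}{2\log\alpha}}\sqrt{x}$. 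This is the precise sense of ``behaves like'' in the lemma.
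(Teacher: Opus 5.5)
Your proposal is correct and follows the same route as the paper: a Mellin transform of $\log g$ treated as a harmonic sum with base function $\log(1+x)$, a triple pole at $s=0$ producing $\frac{\log^2 x}{2\log\alpha}+\frac12\log x+\mathrm{const}$, and a rightward contour shift. Your version is more careful than the paper's, which ignores the simple poles at $s=2\pi i k/\log(1/\alpha)$, $k\neq 0$, and claims an $O(1/x)$ remainder; those poles in fact contribute the small bounded log-periodic term $P(\log x)$ you identify, so the lemma holds only up to a constant times a bounded oscillating factor, exactly as you state.
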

	\begin{proof}
		The proof uses the Mellin transform of the function $\ell(x)=\log g(x)=-\sum_{k=1}^\infty \log(1+\alpha^kx)$, i.e. $\ell^*(s)=\int_0^\infty \ell(x)x^{s-1}dx$. The Mellin transform of the function $\log(1+x)$ is $\int_0^\infty \log(1+x)x^{s-1}dx=\frac{\pi}{s\sin(\pi s)}$ defined for $-1<\Re(s)<0$. By simple algebra, we obtain $\ell^*(s)=-\sum_{k=1}^\infty\alpha^{-k}\frac{\pi}{s\sin(\pi s)}=-\frac{\alpha^{-s}}{1-\alpha^{-s}}\frac{\pi}{s\sin(\pi s)}$ still defined for $-1<\Re(s)<0$ and by Mellin inversion, we have $\ell(x)=\frac{1}{2i\pi}\int \ell^*(s)x^{-s}ds$. This integral has a triple pole at $s=0$ and its residue is $\frac{(\log x)^2}{2\log\alpha}+\frac{\log x}{2}+\frac{\pi^2}{6\log\alpha}+\frac{\log\alpha}{12}$. Extending the integration path to the right of $s=0$,
		\[
		\ell(x)=\frac{(\log x)^2}{2\log\alpha}+\frac{\log x}{2}+\frac{\pi^2}{6\log\alpha}+\frac{\log\alpha}{12}+O\left(\frac{1}{x}\right).
		\]
	\end{proof}
	\begin{theorem}\label{th:no-jump}
		As $\lambda\to 0$, $f(\lambda)\to L$. As  $\lambda\to\infty$ then $f(\lambda)$ behaves like $\lambda^{-1/d_F}$.
	\end{theorem}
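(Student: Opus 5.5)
The plan is to treat $f$ in \eqref{eq:deff} as a harmonic sum, $f(x)=\sum_{k\ge1}a_k\,g(\mu_k x)$ with amplitudes $a_k=L2^{-k}$ and frequencies $\mu_k=\alpha^k$, $\alpha=q/4\in(0,1/4)$. I will use the Mellin transform for $\lambda\to\infty$ and a dominated-convergence argument for $\lambda\to 0$.

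\emph{Small-$\lambda$ limit.} Each factor $1/(1+x2^{-j}(q/2)^j)$ lies in $(0,1]$ and tends to $1$ as $x\to0$. The series $\sum_j 2^{-j}(q/2)^j$ converges, so $\log g(x)\ge -x\sum_j 2^{-j}(q/2)^j\to0$. Hence $g(x)\to1$ with $0<g\le1$. Each term of \eqref{eq:deff} is bounded by $L2^{-k}$, which is summable. Dominated convergence then gives $f(\lambda)\to\sum_k L2^{-k}=L$.

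\emph{Mellin transform of $g$.} Since $g(x)\to1$ at $0$, the integral $g^*(s)=\int_0^\infty g(x)x^{s-1}dx$ converges at the origin only for $\Re s>0$. By the preceding Lemma, $\log g(x)=\frac{(\log x)^2}{2\log\alpha}+O(\log x)$ with $\log\alpha<0$, so $g$ decays faster than any power at infinity. Therefore $g^*(s)$ is analytic in the whole half-plane $\Re s>0$. Moreover $g^*(\sigma)>0$ for real $\sigma>0$, since $g>0$.

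\emph{Mellin transform of $f$.} The usual harmonic-sum rule gives
\[
f^*(s)=L\,g^*(s)\sum_{k\ge1}2^{-k}\alpha^{-ks}=L\,g^*(s)\,\frac{\alpha^{-s}/2}{1-\alpha^{-s}/2}.
\]
The geometric series converges for $|\alpha^{-s}|<2$, that is, for $\Re s<\log 2/\log(1/\alpha)$. Now $\log(1/\alpha)=\log(4/q)$, so this bound equals $\log2/\log(4/q)=1/d_F$. The fundamental strip is therefore $0<\Re s<1/d_F$.

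\emph{Poles and residues.} The poles of $f^*$ on the right boundary of the strip are the zeros of $1-\alpha^{-s}/2$. These are $s_m=1/d_F+2\pi i m/\log(1/\alpha)$ for $m\in\mathbb Z$, all simple. I will shift the inversion contour $f(x)=\frac{1}{2i\pi}\int f^*(s)x^{-s}ds$ to the right, to a line $\Re s=c>1/d_F$. Collecting residues gives
\[
f(x)=x^{-1/d_F}\,P(\log x)+O(x^{-c}),\qquad P(u)=\frac{L}{\log(1/\alpha)}\sum_{m\in\mathbb Z}g^*(s_m)\,e^{-2\pi i m u/\log(1/\alpha)}.
\]
Here $P$ is periodic with period $\log(1/\alpha)$. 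Its mean value is $L\,g^*(1/d_F)/\log(1/\alpha)>0$, so the leading term does not vanish. Substituting $x=\lambda Lp/(2\alpha)$ from \eqref{eq:f} then gives $f(\lambda)\asymp\lambda^{-1/d_F}$, up to a bounded log-periodic modulation. This is the sense of ``behaves like'' in the statement.

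\emph{Main obstacle.} The hard step is justifying the contour shift, which requires $g^*(c+it)$ to decay fast enough in $|t|$. Decay is needed both for the horizontal segments to vanish and for the Fourier series defining $P$ to converge. I would obtain it from analyticity of $g$ in a sector. Each factor $1+x\beta^j$, with $\beta=q/4=\alpha$, vanishes only on the negative real axis. Hence $g$ extends analytically to $|\arg x|<\pi-\varepsilon$. The Lemma's estimate extends uniformly to this sector, because it relies only on the same Mellin expansion of $\log(1+x)$. Rotating the integration ray in the Mellin integral then gives $g^*(c+it)=O(e^{-(\pi-\varepsilon)|t|})$.

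\emph{Fallback.} If this analytic route is too delicate, I would use the functional equation $f(x)=\tfrac L2 g(\alpha x)+\tfrac12 f(\alpha x)$. Iterating it $n\approx\log x/\log(1/\alpha)$ times brings the argument into a bounded window $[1,1/\alpha]$, where $f$ is bounded above and below by positive constants. Since $2^{-n}=x^{-1/d_F}$, this proves the two-sided bound $f(x)\asymp x^{-1/d_F}$ directly.
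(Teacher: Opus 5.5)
Your proposal is correct and follows the paper's route: the harmonic-sum identity $f^*(s)=L\,g^*(s)\frac{\alpha^{-s}/2}{1-\alpha^{-s}/2}$, analyticity of $g^*$ on $\Re s>0$ from the superpolynomial decay of $g$, and the simple poles on $\Re s=1/d_F$ giving $x^{-1/d_F}$ times a log-periodic factor. You are more careful than the paper in two places: it does not justify the contour shift, which your sectorial bound on $g^*(c+it)$ supplies, and it gets the residue sign wrong (its $v'(1/d_F)=-\log\alpha$ should be $\log\alpha$), whereas your constant $L\,g^*(1/d_F)/\log(1/\alpha)>0$ is correct; one caveat is that a positive mean of $P$ alone does not keep $P$ away from $0$, so take the lower bound in $f\asymp x^{-1/d_F}$ from positivity of the terms, e.g.\ $f(x)\ge L\,2^{-k}g(\alpha^k x)$ with $\alpha^k x\in[1,1/\alpha)$, as in your fallback.
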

	\begin{proof}
		The proof of the first assertion about $\lambda\to 0$ follows immediately by definition of $f$ and $g$. The proof for $\lambda\to\infty$ is more involved and we use again the Mellin transform. 
		The function $g(x)$ decays faster than any negative power of $x$ thus its Mellin transform $g^*(s)=\int_0^\infty g(x)x^{s-1}dx$ exists for all $s$ such that $\Re(s)>0$. 
		
		By definition of $f$, we have
		\begin{align}\label{f*}
			f^*(s)=\int_0^\infty f(x)x^{s-1}dx=\frac{L}{2}\frac{(q/4)^{-s}}{1-\frac{1}{2}(q/4)^{-s}}g^*(s).
		\end{align}
		The main singularity of $\frac{1}{1-\frac{1}{2}(q/4)^{-s}}$ is a pole at $s=\frac{1}{d_F}$ (in fact a countable set of simple poles on the vertical line $\Re(s)=\frac{1}{d_F}$) and by virtue of the inverse Mellin transform, $f(x)=\frac{1}{2 i\pi}\int f^*(s)x^{-s}ds$ (more in~\cite{mellin}), $f(x)$ is of order $x^{-1/d_F}$ (see details and the exact asymptotics  in  Appendix).
	\end{proof}
	
	Since the hyperfractal dimension $d_F$ is always greater than 2, the distance driven before a free slot cannot decay faster than $\lambda^{-1/2}$. There is a kind of paradox here because when $d_F=2$ one would expect that the street popup rates are the same for all streets, and with a constant pop-up rate the driven distance should scale as $1/\lambda$. But in fact it is not because $\lambda$ is equal to the total popup rate and not equal to the popup rate of each street (there are $4 \times 2^{\km}$ streets). When $d_F=2$, which occurs when $p=0$, the popup rate of each street would be zero and the average driven distance would be infinite.
	
	\subsection{Distribution and variance in the jumpless model}
	The analysis of the mean distance to find a parking slot can be extended to the  distribution of this distance. The aim is to obtain a result on the variance of the distance.
	
	Since slot appearances on different streets from independent Poisson processes, denoting by $E_1$ a random variable with exponential distribution with parameter $\lambda_1$,    
	\begin{align}\label{eq:RandomVariableIdentity}
		D(\CS_1,\CS_2,\ldots,\CS_k)=\ind{E_1\leq|\CS_1|}E_1+\ind{E_1>|\CS_1|}(|\CS_2|+ D(\CS_2,\ldots,\CS_k)). 
	\end{align}
	This directly yields
	\begin{align}\label{Laplace}
		\E\left(e^{-s D(\CS_1,\CS_2,\ldots,\CS_k)}\right)=\frac{\lambda_1}{\lambda_1+s}\left(1-e^{-(\lambda_1+s)|\CS_1|}\right)+e^{-(\lambda_1+s)|\CS_1|} \E\left(e^{-s D(\CS_2,\ldots,\CS_k)}\right). 
	\end{align}
	which gives by induction that
	\begin{equation}\label{eq:LaplaceJumpless}
		\E\left(e^{-sD(\CS_1,\CS_2,\ldots,\CS_k)}\right)=\sum_{i=1}^{k}\left(\frac{\lambda_i}{\lambda_i+s}\left(1-e^{-(\lambda_i+s)|\CS_i|}\right)\prod_{j<i}e^{-(\lambda_j +s)|\CS_j|}\right)+\prod_{j\leq k}e^{-(\lambda_j +s)|\CS_j|}.
	\end{equation}
	After some calculations detailed in the appendix, we obtain
	\begin{equation}\label{eq:moment2Jumpless}
		\E(D(\CS_1,\ldots,\CS_k)^2)=2\sum_{i=1}^{k}\frac{1+\lambda_i \sum_{j<i} |\CS_j|-e^{-\lambda_i|\CS_i|}(1+\lambda_i\sum_{j\leq i} |\CS_j|)}{\lambda_i^2} \prod_{j<i}e^{-\lambda_j |\CS_j|}.
	\end{equation}
	
	Integrating on segment lengths $\CS_i$, which are independent and have exponential distribution with mean $1/\alpha_i$, we obtain that
	\begin{equation}\label{eq:intmoment2Jumpless}
		\E(D(\CS_1,\ldots,\CS_k)^2)=2\sum_{i=1}^{k}\frac{1/\alpha_i^2}{(1+\lambda_i/\alpha_i)^2} \prod_{j<i}\frac{1}{1+\lambda_j/\alpha_j}
	\end{equation}
	and, using that $\alpha_i=2^{i-\km-1}L$ and $\lambda_i=\mu_{\km-i+1}=\lambda (p/2)(q/2)^{\km-i+1}$, following the same steps as for equation~\eqref{eq:int_mean}, recalling that $\alpha=q/4$ and $\rho=\lambda L p/2$,
	\begin{align}\label{eq:intmoment2Jumplessv2}
		\E(D(\CS_1,\ldots,\CS_{\km+1})^2)=2 \sum_{k=0}^{\km}\frac{L^2}{4^{k}}\frac{1}{(1+\rho \alpha^k)^2} \prod_{j>k}\frac{1}{1+\rho \alpha^j}
	\end{align}
	Recalling that $g$ is already defined for $f$ in equation~\eqref{eq:deff}, defining   the auxiliary function $h$ on the set of positive real numbers  by $h(x)=\frac{1}{(1+x)^2}$ and $F$ by
	\begin{align}\label{eq:defF}
		F(x)=\sum_{k=1}^{\infty} \frac{L^2}{4^{k}} h(\alpha^k x) g(\alpha^k x),
	\end{align}
	$F$ is a harmonic series. Using equation~\eqref{eq:intmoment2Jumplessv2}, in addition to~\eqref{eq:f} and assuming $\km=+\infty$, 
	\begin{align*}
		\Var(D(\CS_1,\ldots,\CS_{\km}))=F(\rho)-f(\rho/\alpha)^2
	\end{align*}
	
	where $f$ is defined by equation~\eqref{eq:deff}. This leads to the following result. 
	
	\begin{theorem}\label{th:no-jump_Variance}
		When  $\lambda\to\infty$, $\Var(D)$ behaves like $\lambda^{-2/d_F}$.
	\end{theorem}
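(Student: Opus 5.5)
We treat $F$ exactly as $f$ was treated in the proof of Theorem~\ref{th:no-jump}, i.e.\ as a harmonic sum analysed with the Mellin transform, and then read off the variance from $\Var(D)=F(\rho)-f(\rho/\alpha)^2$. The first step is to note that $x\mapsto h(x)g(x)$ is bounded near $0$ and, by the Lemma on $g$, decays faster than any negative power of $x$ at infinity (since $g(x)=\exp\bigl(\frac{(\log x)^2}{2\log\alpha}+O(\log x)\bigr)$ with $\log\alpha<0$). Hence the Mellin transform $(hg)^*(s)$ exists and is analytic for $\Re(s)>0$. By the usual rescaling rule for harmonic sums,
\[
F^*(s)=\sum_{k\ge1}\frac{L^2}{4^k}\alpha^{-ks}(hg)^*(s)=\frac{L^2}{4}\,\frac{\alpha^{-s}}{1-\frac14\alpha^{-s}}\,(hg)^*(s),
\]
valid on the strip where $|\alpha^{-s}/4|<1$, i.e.\ $0<\Re(s)<\log 4/\log(1/\alpha)$. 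Since $\alpha=q/4$ and $d_F=\log(4/q)/\log 2$, we have $\log(1/\alpha)=d_F\log 2$, so the first singularities to the right are the simple poles at $s_m=\frac{2}{d_F}+\frac{2i\pi m}{\log(1/\alpha)}$, $m\in\mathbb{Z}$.

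Next we apply Mellin inversion and shift the contour to the right past $\Re(s)=2/d_F$. The residues give
\[
F(x)=x^{-2/d_F}P_2(\log x)+O(x^{-2/d_F-\varepsilon}),
\]
where $P_2$ is a periodic function with period $\log(1/\alpha)$ whose Fourier coefficients are proportional to $(hg)^*(s_m)$. The same computation, carried out for $f$ in the Appendix, gives $f(x)=x^{-1/d_F}P_1(\log x)+O(x^{-1/d_F-\varepsilon})$. It follows that $f(\rho/\alpha)^2=\rho^{-2/d_F}\alpha^{2/d_F}P_1(\log\rho-\log\alpha)^2+o(\rho^{-2/d_F})$. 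The terms combine to
\[
\Var(D)=\rho^{-2/d_F}\,Q(\log\rho)+o(\rho^{-2/d_F}),
\]
where $Q$ is a bounded periodic function. Since $\rho=\lambda Lp/2$, this is the claimed order $\lambda^{-2/d_F}$.

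Two technical points must be checked before the contour can be moved. First, $(hg)^*(s)$ must decay fast enough along vertical lines. We would prove this as for $g^*$: $hg$ extends analytically to a sector $|\arg x|<\theta$, because the factors $1/(1+\alpha^j x)$ have no zeros or poles there and the Lemma's bound persists in the sector. This gives exponential decay of $(hg)^*$ in $|\Im s|$, so the series of residues converges and the shifted integral is $O(x^{-2/d_F-\varepsilon})$. Second, if one wants an order strictly no smaller than $\lambda^{-2/d_F}$, one must show $Q\not\equiv0$.

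The main obstacle is this non-degeneracy of $Q$. The variance is a difference of two quantities of the same order, so the leading terms could in principle cancel. The route I would try first is a direct lower bound that avoids fluctuation constants. Write $\Var(D)\ge \E[\Var(D\mid \CS_1,\CS_2,\dots)]$. At the index $k\approx\log\rho/\log(1/\alpha)$, the segment being scanned has both $\lambda_k|\CS_k|$ and the probability of reaching it of order $1$. Conditionally on reaching it, the distance on that segment is a truncated exponential of scale $2^{-k}L\asymp\rho^{-1/d_F}$, which has conditional variance $\asymp\rho^{-2/d_F}$. This yields $\Var(D)\ge c\,\rho^{-2/d_F}$. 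Combined with the Mellin upper estimate, $\Var(D)\asymp\lambda^{-2/d_F}$ follows. If one wants it, the mean value of $Q$ (the $m=0$ residue) then gives the constant $C'$.
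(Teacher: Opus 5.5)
Your strategy is the paper's: its proof only says that the Mellin scheme of Theorem~\ref{th:no-jump} applies to $F(\rho)-f(\rho/\alpha)^2$. Your Mellin steps are correct: the strip $0<\Re(s)<2/d_F$, the poles at $2/d_F+2i\pi m/\log(1/\alpha)$, vertical decay via analyticity in a sector, and squaring the expansion of $f$. You also go beyond the paper, rightly, in two ways. You keep the log-periodic factor $Q(\log\rho)$ instead of asserting a constant, and you rule out cancellation with a direct lower bound. That lower bound is sound. Condition on all segment lengths, then on the event that the car reaches depth $k^*=\lfloor\log\rho/\log(1/\alpha)\rfloor$ and parks there. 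This event has probability bounded below, and on it $D$ is a fixed length plus an exponential truncated at a length of order $2^{-k^*}L\asymp\rho^{-1/d_F}$. The law of total variance then gives $\Var(D)\ge c\,\rho^{-2/d_F}$.

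The gap is the identity you read the variance from. The display $\Var(D)=F(\rho)-f(\rho/\alpha)^2$ preceding the theorem is false: at $\rho=0$ it gives $F(0)-f(0)^2=\frac{L^2}{3}-L^2<0$. Two things go wrong upstream. First, $F$ drops the factor $2$ of \eqref{eq:intmoment2Jumplessv2}. Second, \eqref{eq:intmoment2Jumpless} is not the integral of \eqref{eq:moment2Jumpless}: after writing $\sum_{j\le i}|\CS_j|=\sum_{j<i}|\CS_j|+|\CS_i|$, the term $\lambda_i\sum_{j<i}|\CS_j|$ contributes
\[
\frac{2}{\lambda_i+\alpha_i}\,\E\Bigl[\Bigl(\sum_{j<i}|\CS_j|\Bigr)\prod_{j<i}e^{-\lambda_j|\CS_j|}\Bigr],
\]
and this term has been lost. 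In depth indexing, with $y_k=\rho\alpha^k$, the correct second moment is
\[
\E[D^2]=2L^2\sum_k 4^{-k}\bigl(h(y_k)g(y_k)+\Phi(y_k)\bigr),\qquad \Phi(y)=\frac{g(y)}{1+y}\sum_{n\ge1}\frac{2^{-n}}{1+\alpha^n y}.
\]
At $\rho=0$ this correctly gives $(\E D)^2+\sum_k 4^{-k}L^2$. Since $\Phi>0$ enters with the same weights $4^{-k}$, the missing part is of exact order $\rho^{-2/d_F}$. So your $P_2$ and $Q$ are wrong, and your Mellin ``upper estimate'' bounds a quantity that is not $\Var(D)$; it may even be negative, in which case your lower bound would contradict it. The paper's sketch rests on the same identity, so agreeing with it does not save this step.

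The repair needs no new idea. The function $hg+\Phi$ is bounded at $0$, decays faster than any power at $\infty$ (it carries the factor $g$), and extends to the same sector. Your contour shift therefore applies verbatim to the corrected sum. Subtracting $f(\rho/\alpha)^2$ then gives the true periodic $Q$, and your lower bound shows it is bounded away from $0$. If only $\Var(D)\asymp\lambda^{-2/d_F}$ is wanted, $\Var(D)\le\E[D^2]=O(\rho^{-2/d_F})$ together with your lower bound suffices.
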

	The argument follows the same Mellin-transform scheme as in the proof of Theorem~\ref{th:no-jump}. We prove more precisely that, as $\lambda$ tends to $+\infty$,
	\begin{align}
		\Var(D)\sim_{\lambda \rightarrow +\infty} \frac{L^2}{\log \alpha} \left(r^*(2/d_F)-\alpha^{1/d_F}g^*(2/d_F)\right)\rho^{-2/d_F}.
	\end{align}
	
	\section{The jump-over model}\label{sec:jump}
	For the analysis of the jump-over model, the sequence of segments $\CS_1,\ldots,\CS_k$ still has decreasing depths, but the depth is no longer required to decrease by exactly one unit at each step. 
	
	\begin{theorem}
		In the jump-over strategy, the distance to a free parking slot is of order $\Theta(\lambda^{-1/d_F})$.
	\end{theorem}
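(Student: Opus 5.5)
Since the jump-over strategy can only shorten the search relative to the jumpless one, the upper bound $O(\lambda^{-1/d_F})$ comes from the jumpless estimate, and the work is a matching lower bound $\Omega(\lambda^{-1/d_F})$.

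\emph{Upper bound.} Couple the two strategies on the same Poisson street configuration and the same slot-release processes. The jump-over car turns onto the first crossing street of depth strictly smaller than its current depth, whereas the jumpless car waits for depth exactly one less. I would argue by induction along the path that, at each turn, the jump-over car sits on a street whose depth is at most that of the jumpless car. Its cumulative release intensity $\lambda_k=\mu_k\lambda$ is then pointwise at least as large, since $\mu_k$ decreases in $k$. One could alternatively compare the expectations~\eqref{mean_Jumpless2} directly: the probability of reaching depth $\le k$ within a given distance is larger under jump-over. Either route gives $\E[D_{\rm jump}]\le f(\lambda Lp/(2\alpha))$, and Theorem~\ref{th:no-jump} yields $O(\lambda^{-1/d_F})$. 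If the pathwise coupling proves delicate, because the two cars follow different geometric paths, I would instead use a stochastic-domination argument on the sequence of depths visited.

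\emph{Lower bound.} The idea is that any car needs to reach a street of depth around $k^*$, where $k^*$ solves $\lambda\mu_{k^*}2^{-k^*}\asymp 1$, i.e.\ $(q/4)^{k^*}\asymp 1/\lambda$, so that $2^{-k^*}\asymp\lambda^{-1/d_F}$. Streets of depth $k$ have spacing of order $2^{-k}$, so on one of them the expected number of slots within its typical segment is $\lambda\mu_k2^{-k}\asymp\lambda\alpha^k$. The proof would proceed in three steps.
\begin{enumerate}[label=(\roman*)]
\item Consider the first distance $2^{-k^*}c$ travelled, with a small constant $c$. The total slot intensity encountered on streets of depth $\ge k^*-m$ is at most $\lambda\mu_{k^*-m}c2^{-k^*}$, and I would show that this stays bounded by a small constant.
\item Bound the probability of hitting a street of depth $<k^*-m$ within distance $c2^{-k^*}$. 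Such streets have density of order $2^{k^*-m}$, so this probability is $O(c2^{-m})$.
\item Combining (i) and (ii), with probability bounded below the car has not parked after distance $c2^{-k^*}$. Hence $\E D\ge c'2^{-k^*}=c''\lambda^{-1/d_F}$.
\end{enumerate}

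\emph{Main obstacle.} The hardest step is the lower bound: step (i) must be balanced against step (ii). The intensity on depth $k^*-m$ grows like $(2/q)^m$, while the hitting probability decays like $2^{-m}$. I would therefore sum over $m$, bounding for each $m$ the probability of parking on a street of depth $k^*-m$ within distance $c2^{-k^*}$ by roughly $c\,2^{-m}\cdot\min(1,c\,\lambda\mu_{k^*-m}2^{-k^*})$. This requires choosing $k^*$ precisely and checking that the resulting series is $O(c)$. If that series fails to be small, I would fall back on a Mellin-style bound, analogous to the appendix, on a harmonic sum that minorizes the jump-over expectation.
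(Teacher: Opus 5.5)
Your argument is correct and takes a genuinely different route from the paper. The paper never compares with the jumpless strategy. It sums over all depth sequences with weights $p(k_i|k_{i+1})=2^{k_i-k_{i+1}}$ and packages them in an auxiliary $\gamma$ satisfying a self-similar functional equation. It then passes to $j(x)=\gamma(x)/(1+x/\alpha)$, solves the difference equation for $j^*(s)$ by an infinite product, and tracks the poles of $j^*$ and $h^*$. Finally it reads $\Theta(x^{-1/d_F})$ off the simple pole of $(1-\alpha^{-s}/2)^{-1}$ at $s=1/d_F$ in $f^*(s)$.

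You instead take the upper bound from Theorem~\ref{th:no-jump} by domination, and prove the lower bound by a single-scale estimate at the depth $k^*$ where $x\alpha^{k^*}\asymp 1$. The Mellin route could in principle give the constant and the log-periodic fluctuation. The cost is justifying the chosen solution of the difference equation (unique only up to a $1$-periodic factor) and its pole structure. Your route gives only $\Theta$-bounds, which is exactly what is claimed, by elementary means. Moreover, your lower bound never uses the jump-over rule. So it shows that no turning rule can beat the exponent $1/d_F$ for a car starting deep, e.g.\ $\km=\infty$.

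Two points need tightening.

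\emph{First}, the domination must be in the distance variable, not the turn index. Treat the depth $X_t$ after distance $t$ as a Markov jump process, which the model's independence of segments allows, and couple monotonically:
\begin{itemize}
\item At equal depth $k$, the jump-over car leaves at least as fast. In the Poisson street model its rate is $2^k-1$ (all depths $<k$), versus $2^{k-1}$ (depth $k-1$) for the jumpless car. So let both cars jump together at the jumpless instants, the jump-over car landing at depth $\le k-1$, and give it extra jumps of its own at the surplus rate.
\item When the jump-over car is strictly lower, a jumpless jump to $k-1$ keeps the order.
\end{itemize}
Then $X^{\mathrm{jump}}_t\le X^{\mathrm{jumpless}}_t$ for all $t$. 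Since $\mathbb{P}(D>t)=\E\exp\bigl(-\int_0^t\lambda\mu_{X_s}\,ds\bigr)$ and $\mu_k$ decreases in $k$, this gives $D_{\mathrm{jump}}\preceq D_{\mathrm{jumpless}}$. The marginal statement about reaching depth $\le k$ by a given distance does not by itself control this functional.

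\emph{Second}, your ``main obstacle'' disappears if you take $m=0$; no sum over $m$ is needed. Set $x=\lambda Lp/2$ and $k^*=\lceil\log_{1/\alpha}x\rceil$.
\begin{itemize}
\item Streets of depth $<k^*$ are crossed at rate less than $2^{k^*}/L$. So with probability at least $e^{-c}$, none is met within distance $cL2^{-k^*}$.
\item On that event the integrated pop-up rate is at most $c\,x\alpha^{k^*}\le c$.
\item Hence $\mathbb{P}(D>cL2^{-k^*})\ge e^{-2c}$.
\item Since $2^{-k^*}\ge \tfrac12 x^{-1/d_F}$, this gives $\E D\ge \tfrac{c}{2}e^{-2c}L\,x^{-1/d_F}$.
\end{itemize}
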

	\begin{proof}
		Let $k_i$ denote the depth of segment $\CS_i$. 
		Proceeding as in the jumpless case, and integrating over the segment lengths, we obtain the following expression, where the dependence on the depths $k_j$ appears through the corresponding intensities:
		\begin{equation}
			\mathbb{E}[D(\CS_1,\ldots,\CS_{\km+1})]
			=\sum_{i=1}^{\km}
			\prod_{j\ge i}\frac{1}{1+\lambda L (p/2)(q/4)^{k_j}}.
		\end{equation}
		
		If we want to sum over all possible jumps, we must consider the probability $p(k_{i-1}|k_i)$ that the car jumps from level $k_i$ to $k_{i-1}$. In the case of the binary hyperfractal we have $p(k_{i}|k_{i+1})=\frac{1}{2^{k_{i+1}-k_i}}$.
		
		Assuming $k_1=0$, we have $\mathbb{E}[D]=f((p/2)\lambda L)$ with 
		\begin{equation}
			f(x)=\sum_{k=0}^\infty L\frac{1}{1+x\alpha^k}\frac{1}{2^{k+1}}\gamma(x\alpha^{k+1})
		\end{equation}
		with
		\begin{equation}
			\gamma(x)=\sum_{0=k_1<\cdots<k_i<\dots}\prod_{i\ge 1} p(k_i|k_{i+1})\frac{1}{1+x\alpha^{k_{i+1}}}
		\end{equation}
		
		We notice that $\gamma(x)$ does not play the same role as $g(x)$ in the previous section, for the jumpless case. Nevertheless we notice that $\gamma(0)=1$. Using that $p(k_i|k_{i+1})=2^{k_i-k_{i+1}}$ is translation-invariant, we obtain the functional equation:
		\[
		\gamma(x)=\sum_{k=0}^\infty\frac{p(0|k)}{1+x\alpha^k}\gamma(x\alpha^{k+1})
		\]
		Thus defining $j(x)=\frac{1}{1+x/\alpha}\gamma(x)$ we get the equation:
		\[
		(1+x/\alpha)j(x)=\sum_{k=1}^\infty \frac{1}{2^k}j(x\alpha^{k}).
		\]
		The Mellin transform $j^*(s)$, of function $j(x)$, satisfies the equation
		\begin{equation}
			j^*(s)+\frac{1}{\alpha}j^*(s+1)=\sum_{k=1}^\infty\frac{\alpha^{-ks}}{2^k}j^*(s)=\frac{\alpha^{-s}/2}{1-\alpha^{-s}/2}j^*(s).
		\end{equation}
		Equivalently,
		\begin{equation}
			j^*(s+1)=-\alpha\left(1-\frac{\alpha^{-s}/2}{1-\alpha^{-s}/2}\right)j^*(s).
		\end{equation}
		If we have a function $\beta(s)$ such that $\frac{\beta(s+1)}{\beta(s)}=-\alpha$ we have
		\[
		\frac{j^*(s+1)}{\beta(s+1)}=\left(1-\frac{\alpha^{-s}/2}{1-\alpha^{-s}/2}\right)\frac{j^*(s)}{\beta(s)}.
		\]
		A good candidate is $\beta(s)=\frac{\alpha^s}{\sin(\pi s)}$. Therefore 
		\begin{equation}
			j^*(s)=\frac{\alpha^s}{\sin(\pi s)}\prod_{k=1}^\infty \left(1-\frac{\alpha^{k-s}/2}{1-\alpha^{k-s}/2}\right)
		\end{equation}
		up to a multiplicative constant chosen so that $\gamma(0)=1$. The factor is $\pi\prod_{k=1}^\infty\left(1-\frac{\alpha^{k}/2}{1-\alpha^{k}/2}\right)^{-1}$. 
		
		We notice that the only pole of $j^*(s)$ between 0 and 2 is the root of $1-\alpha^{1-s}/2$, namely $\frac{1}{d_F}+1$, since the pole at $s=1$ of $1/\sin(\pi s)$ is canceled by the numerator $1-\frac{\alpha^{1-s}/2}{1-\alpha^{1-s}/2}$. Consequently $j(x)$ behaves in $x^{-1-1/d_F}$ when $x\to\infty$.
		
		We have $f(x)=\sum_{k\ge 1}\frac{1}{2^k}\frac{1}{1+x\alpha^{k-1}}j(x\alpha^k)$. Thus introducing $h(x)=\frac{1}{1+x/\alpha}j(x)$ we have $f(x)=\sum_{k\ge 1}\frac{1}{2^k}h(\alpha^kx)$ and the respective Mellin transforms $f^*(s)$ and $h^*(s)$ satisfy 
		\[
		f^*(s)=\frac{\alpha^{-s}/2}{1-\alpha^{-s}/2}h^*(s).
		\]
		
		But $j(x)=(1+x/\alpha)h(x)$, that is $j^*(s)=h^*(s)+\frac{1}{\alpha}h^*(s+1)$. Thus 
		\[
		h^*(s)=\sum_{k\ge 1}\alpha^k j^*(s-k)
		\]
		entailing a sequence of simple poles located at $\frac{1}{d_F
		}+1+k$ (modulo some imaginary components). Since the relation $f^*(s)=h^*(s)\frac{\alpha^{-s}/2}{1-\alpha^{-s}/2}$ adds only a new pole at $s=\frac{1}{d_F}$ (plus some imaginary parts), thus $f(x)=\Theta(x^{-1/d_F})$.
	\end{proof}
	
	
	\section{Distribution of the number of turns before parking}
	The steering wheel is the most used component in a car. We denote $\bT(\lambda)$ the random variable which counts the number of turns at street crossings before parking.

	In the early phase of the parking search the car progresses on short segments with no available slots. Thus when $\km$ is large there would have been infinitely many turns before the car arrives in segments with significant turnover. Therefore the quantity of interest is $\km-\bT(\lambda)$. We have then when $\km\to\infty$ for the jumpless model:
	
	with $c_k=\prod_{k\le\ell\le \km} g(\alpha^\ell x)=\frac{g(\alpha^k x)}{g(\alpha^{\km} x)}$
	and $c(u)=\sum_{k=0}^{\km}c_k u^{\km-k}$. We have
	\begin{equation}
		\mathbb{E}[u^{\bT(\lambda)}]=\left(1-\frac{1}{u}\right)c(u)+\frac{1}{u}
	\end{equation}
	Thus
	\begin{align*}
		\mathbb{E}[u^{\bT(\lambda)}]=&\left(1-\frac{1}{u}\right)\sum_{k=0}^{\km}\frac{g(\alpha^k x)}{g(\alpha^{\km} x)}u^{\km-k}+\frac{1}{u}\\
		=&\left(1-\frac{1}{u}\right)\sum_{k=0}^{\km}\frac{g(\alpha^k x)-1}{g(\alpha^{\km} x)}u^{\km-k}+\frac{1-u^{\km+1}}{1-u}+\frac{1}{u}\\
		=&\left(1-\frac{1}{u}\right)f_{\km}(x,\frac{1}{u})u^{\km}+\frac{1}{u}\left(1-\frac{1}{g(\alpha^{\km}x)}\right)+\frac{u^{\km}}{g(\alpha^{\km}x)}
	\end{align*}
	with $f_k(x,u)=\sum_{\ell\le k}\frac{g(\alpha^\ell x)}{g(\alpha^k x)}u^\ell$. Thus
	\begin{align*}
		\mathbb{E}[u^{\km-\bT(\lambda)}]=&u^{\km}\mathbb{E}[u^{\bT(\lambda)}]\\
		=&(1-u)f_{\km}(x,u)+\frac{1}{g(\alpha^{\km}x)}+u^{\km+1}\left(1-\frac{1}{g(\alpha^{\km}x)}\right)
	\end{align*}
	Since $g(\alpha^k x)-1=O(\alpha^k)$ thus when $\km\to\infty$ $f_{\km}(x,u)\to f(x,u)$ and $u^{\km+1}(1-\frac{1}{g(\alpha^{\km}x)})\to 0$ as long as $|u|<\frac{1}{\alpha}$. Thus when $|u|<\frac{1}{\alpha}$
	\begin{equation}
		\mathbb{E}[u^{\km-T(\lambda)}]\to\sum_{k=0}^\infty (g\left(x\alpha^k\right)-1)u^{k}(1-u)+1.
		\label{eq-uTm}\end{equation}
	with $x=L(p/2)\lambda$.
	This sum converges uniformly for all $u$ such that $|u|<\frac{1}{\alpha}$ since $g(x)-1=O(x)$ when $x\to 0$, and write:
	\[
	f(x,u)=(1-u)\sum_k u^{k}(g(x\alpha^k)-1)
	\]
	and we have $E[u^{\km-\bT(\lambda)}]=f(x,u)+1$. 
	\begin{theorem}
		In the jumpless model, we have $\mathbb{E}[\km-\bT(\lambda)]=\frac{\log_2\lambda}{d_F}+O(1)$ and $\km-\bT(\lambda)=\frac{\log_2\lambda}{d_F}+O(1)$. The $O(1)$ terms are periodic random functions of $\log\lambda$. The convergence is in moment and in distribution.
	\end{theorem}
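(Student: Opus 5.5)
Write $N=\km-\bT(\lambda)$ and use the limiting generating function \eqref{eq-uTm}. It gives the tail representation
\[
\Pr(N\ge k)=1-g(x\alpha^k)\quad (k\ge1),\qquad x=L(p/2)\lambda .
\]
Indeed, $\sum_{k\ge0}(g(x\alpha^k)-1)u^k(1-u)+1$ has exactly the form $\sum_k \Pr(N\ge k)(u^k-u^{k-1})$-type Abel summation. I will read the limit law of $N$ off this formula.

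\textbf{Step 1: where the distribution of $N$ sits.} The quantity $1-g(y)$ is the probability of \emph{not} being absorbed on the remaining segments. The lemma on $g$ gives $\log g(y)=\frac{(\log y)^2}{2\log\alpha}+O(\log y)$ as $y\to\infty$, so $g(y)\to0$ superexponentially in $\log y$. Also $1-g(y)=O(y)$ as $y\to0$.

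Put $t=\log x/\log(1/\alpha)$, so that $x\alpha^{k}=\alpha^{k-t}$. Then:
\begin{itemize}
\item for $k<t-m$, we have $\Pr(N\ge k)=1-g(\alpha^{k-t})$, which is extremely close to $1$;
\item for $k>t+m$, we have $\Pr(N\ge k)=O(\alpha^{k-t})$, a geometric tail.
\end{itemize}
Hence $N-t$ is tight, with uniformly exponential tails on both sides. This uniformity is what upgrades convergence in distribution to convergence of all moments.

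\textbf{Step 2: exact asymptotic law and periodicity.} For integer $j$,
\[
\Pr(N-\lfloor t\rfloor\ge j)=1-g\bigl(\alpha^{\,j-\{t\}}\bigr).
\]
This depends on $\lambda$ only through the fractional part $\{t\}$. So $N=\lfloor t\rfloor+Z_{\{t\}}$, where $Z_\theta$ is a random variable whose law varies continuously and $1$-periodically in $\theta$. This is the "periodic random $O(1)$" in the statement. Because of the tails from Step 1, $\sup_\theta\E|Z_\theta|^r<\infty$ for every $r$. Consequently $N-t$ converges in distribution and in moments along subsequences with fixed $\{t\}$.

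\textbf{Step 3: identify the slope.} We have $t=\log_2\lambda\cdot\frac{\log2}{\log(4/q)}+O(1)$. Since $\log(4/q)=d_F\log 2$ and $\alpha=q/4$, this gives $t=\frac{\log_2\lambda}{d_F}+O(1)$.

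For the mean, write $\E[N]=\sum_{k\ge1}(1-g(x\alpha^k))$, a harmonic sum. The Mellin route is an alternative: the transform of $1-g$ on $-1<\Re s<0$ times $\frac{\alpha^{-s}}{1-\alpha^{-s}}$ has a double pole at $s=0$. That pole yields $\log x/\log(1/\alpha)$ plus a constant, and the poles at $2i\pi k/\log\alpha$ produce the bounded log-periodic fluctuation.

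\textbf{Main obstacle.} The delicate point is justifying that the $\km\to\infty$ limit of \eqref{eq-uTm} really is the law of $N$ with these tails. One also needs the limit to commute with moments, uniformly in $\lambda$, using $g(y)-1=O(y)$. I would handle this by dominated convergence on the tail sums, using the bounds from Step 1.
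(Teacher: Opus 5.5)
Your proof is correct in substance, but it takes a different route from the paper. The paper works in Mellin space throughout. It writes $\E[\km-\bT(\lambda)]=\sum_{k\ge0}(1-g(x\alpha^k))$, whose transform $-g^*(s)/(1-\alpha^{-s})$ has a double pole at $s=0$, giving $\log x/\log(1/\alpha)$, and simple poles at $2ik\pi/\log\alpha$, giving the log-periodic term. For the distribution it inverts $f^*(s,u)=\frac{1-u}{1-\alpha^{-s}u}\,g^*(s)$ at the moving pole $s=\log u/\log\alpha$, obtaining $\E[u^{N}]\approx x^{-\log u/\log\alpha}\times(\text{bounded periodic})$, and concludes with L\'evy's continuity theorem and Cauchy estimates.

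You instead read the distribution function directly off \eqref{eq-uTm} and use the exact dilation identity $x\alpha^k=\alpha^{k-t}$. This shows that the law of $N-t$ is \emph{exactly}, not just asymptotically, a continuous $1$-periodic function of $t$. The uniform tails come from $1-g(y)=O(y)$ near $0$ and the decay of $g$ at infinity; for the latter even the crude bound $g(y)\le 1/(1+\alpha y)$ suffices, so the lemma on $g$ is not needed. Your route is more elementary and avoids the analytic work the paper leaves implicit: growth of $g^*$ on vertical lines, a residue at a pole depending on $u$, and the L\'evy/Cauchy step. The Mellin route, in exchange, gives explicit quantities, such as the constant term of the mean and the Fourier coefficients of the fluctuation in terms of $g^*(2ik\pi/\log\alpha)$.

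A few small corrections, none of which affects the $O(1)$ conclusion.
\begin{enumerate}
\item Your indexing is shifted by one. The $u^0$ coefficient of \eqref{eq-uTm} is $g(x)$, so the correct reading is $\Pr(N\le k)=g(x\alpha^k)$, that is, $\Pr(N\ge k)=1-g(x\alpha^{k-1})$ for $k\ge1$. Consequently $\E[N]=\sum_{k\ge0}(1-g(x\alpha^k))$, as in the paper. The shift only moves $Z_\theta$ by one unit.
\item Your interpretation of $1-g$ is reversed. $1-g(x\alpha^k)=\Pr(N>k)$ is the probability of parking \emph{before} reaching depth $k$, not the probability of escaping absorption.
\item It is the law of $N-t=Z_{\{t\}}-\{t\}$, not that of $Z_\theta$, that is continuous and periodic. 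As $\theta\uparrow1$, $Z_\theta$ tends in law to $Z_0+1$.
\item The obstacle you flag is already covered by the paper's statement that the generating function converges on $|u|<1/\alpha$, where $1/\alpha>1$. Convergence at a real $r\in(1,1/\alpha)$ bounds $\E[r^{N}]$ for all large $\km$. That gives uniform integrability of every moment, hence convergence in distribution and of all moments as $\km\to\infty$.
\end{enumerate}
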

	
	\begin{proof}
		We know that $\mathbb{E}[\km-\bT(\lambda)]=\frac{\partial}{\partial u}f(x,1)=-\sum_k(g(x\alpha^k)-1)$.
		The function $g(x)$ has a Mellin transform $g^*(s)$ defined on the strip $(0,+\infty)$, this is also the Mellin transform of $g(x)-1$ but now it is defined on the strip $(-1,0)$ with analytic continuation beyond 0 and on $s=0$ there is a simple pole with residue 1, because $g(x)$ is analytic and $g(0)=1$. The Mellin transform of the generating function of the average number of turns is therefore $-\frac{1}{1-\alpha^{-s}}g^*(s)$. This expression has a double pole on $s=0$ and a sequence of simple poles on the integer multiples of $2i\pi/\log\alpha$. Thus in the inverse Mellin operation the residue of this function with the factor $x^{-s}$ will be $-\frac{\log x}{\log\alpha}$ plus a sum of powers of $x^{2i\pi/\log\alpha}$ which contributes as a periodic function of $\log x$ with period $-\log \alpha$. In passing $\log\alpha=-d_F\log 2$.
		
		For the study of the distribution we consider the Mellin transform of $f(x,u)$, $f^*(s,u)$ which is $\frac{1-u}{1-\alpha^{-s}u}g^*(s)$, defined on $(-1,0)$. Let $u=e^t$ with $t\neq 0$, the function $f^*(s,u)$ has simple poles at $s=\frac{t}{\log\alpha}$ plus integer multiples of $2i\pi/\log\alpha$, all with residue $\frac{1}{\log\alpha}$. There is also the isolated pole of $g^*(s)$ at $s=0$ of residue $1$ which will be canceled by the last term 1 in~(\ref{eq-uTm}). The reverse Mellin operation yields 
		\begin{equation}
			f(x,e^t)=\frac{1}{\log\alpha}x^{-t/\log\alpha}\left(g^*(-t/\log\alpha)+P(x,t)\right)-1
		\end{equation}
		Where $P(x,t)$ is periodic in $\log x$ with period $-\log\alpha$. The last term 1 disappears when we consider $\mathbb{E}[e^{t(\km-\bT(\lambda)}]$. We see that the $\mathbb{E}[e^{t(\km-\bT(\lambda)}e^{-t\mathbb{E}[\km-\bT(\lambda)]}]$ is $O(1)$ and since $t$ is possibly complex, by Lévy and Cauchy theorems $\km-\bT(\lambda)+\frac{\log\lambda}{\log\alpha}$ is $O(1)$ in distribution and in moments.
	\end{proof}
	
	The theorem also holds for the jump-over model. But in this case we have the expression
	\begin{equation}
		\mathbb{E}[u^{\km-\bT(\lambda)}]=(1-u)\sum_{i=1}^{k=\km}\prod_{j\ge i}\frac{u}{1+\lambda L (p/2)(q/4)^{k_j}}
	\end{equation}
	which can be handled in a similar way but we omit the details of the proof.
	
	\section{Generalization of hyperfractal distributions}
	\subsection{Beyond dyadic scale factor}
	The Manhattan construction is a convenient representative of a broader
	class of self-similar hierarchical street geometries.
	A minimal generalisation is obtained by replacing the dyadic scale factor
	and the fixed mass ratios by generic geometric growth parameters:
	at refinement step $n$, segments have a typical length
	$\ell_n = c\, s^n$ with $0<s<1$, and carry a total mass
	$m_n = m_0\, r^n$ with $r>0$, uniformly distributed along
	the segments of that level.
	Eliminating the refinement index $n$ between these two relations yields the logarithmic exponent
	\[
	d_F=\frac{\log r}{\log s}.
	\]
	
	This corresponds to a parametric extension of the Manhattan construction,
	where the hierarchical mechanism remains unchanged. A structurally more general hyperfractal framework is introduced in a later section.
	
	Such constructions also extend to heterogeneous cities composed of several interacting neighbourhoods, each characterised by its own growth parameters (see \cite{DeperleJacquetRandomCities}).
	\subsection{Randomization of pop-up intensity}
	We introduce a random modulation of the pop-up intensity.
	To each street of level $k$ we associate a positive random variable $W_k$,
	modeling local attractiveness variations between streets of the same level.
	We assume that $(W_k)_{k\ge1}$ are independent and identically distributed,
	with $W\ge0$ almost surely.
	\\
	
	Conditionally on $(W_k)$, the popup process on a street of level $k$
	is modeled by a Poisson process with intensity $\lambda_k W_k$.
	Proceeding as in the deterministic case, we obtain
	\[
	\mathbb{E}[D(\lambda)\mid (W_j)]
	=
	\sum_{k\ge1}
	\frac{L}{2^k}
	\prod_{j\ge k}
	\frac{1}{1+\alpha^j x W_j},
	\qquad x=\frac{\lambda L p}{2v}.
	\]
	
	Taking expectations and using independence,
	\[
	\mathbb{E}[D(\lambda)]
	=
	\sum_{k\ge1}
	\frac{L}{2^k}
	\prod_{j\ge k}
	G(\alpha^j x),
	\]
	where
	$
	\displaystyle G(u)=\mathbb{E}\!\left[\frac{1}{1+uW}\right].
	$
	and defining
	$
	g(x)=\displaystyle \prod_{k\ge1}G(\alpha^k x)$ and
	$
	f(x)=\displaystyle \sum_{k\ge1}\frac{L}{2^k}g(\alpha^k x),
	$
	we obtain
	\[
	\mathbb{E}[D(\lambda)] = f(x).
	\]
	
	The function $f$ has the same functional structure as in the deterministic case,
	but with the basic factor $(1+\alpha^k x)^{-1}$ replaced by $G(\alpha^k x)$.
	Its asymptotic behaviour is governed by the analytic structure of
	\[
	L_G(s)=\int_0^\infty \log G(u)\,u^{s-1}\,du,
	\]
	through the Mellin transform of $\log g(x)$.
	
	\begin{theorem}
		Assume $W\ge0$ a.s., $\mathbb{E}[W^2]<\infty$, and that there exists $\beta \in (0,1)$, $c_1, c_2 > 0$
		such that
		\[
		c_1 t^\beta \leq \mathbb{P}(W \leq t) \leq c_2 t^\beta
		\quad \text{when $t$ approaches $0$}.
		\]
		Then
		\[
		f(x)\sim C\,x^{-1/d_F}
		\qquad\text{as } x\to\infty,
		\]
		for some constant $C>0$.
		Moreover $f(x) \xrightarrow[x \to 0]{} L$.
	\end{theorem}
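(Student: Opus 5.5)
The plan is to follow the Mellin scheme of Theorem~\ref{th:no-jump}. The only new input is that the randomized $g$ is still integrable against $x^{s-1}$ near infinity for some $s>1/d_F$. This is the one place where the hypotheses on $W$ enter.

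\emph{Limit at $0$.} Since $0<G\le 1$ and $G(u)\to 1$ as $u\to 0$ by dominated convergence, we have $g(\alpha^k x)\to 1$ for each $k$ as $x\to 0$. Each term of $f$ is bounded by $L2^{-k}$, so dominated convergence in the sum over $k$ gives $f(x)\to\sum_{k\ge1}L2^{-k}=L$.

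\emph{Reduction to a Mellin statement.} As in \eqref{f*}, the scaling $f(x)=\sum_k L2^{-k}g(\alpha^kx)$ gives
\[
f^*(s)=\frac{L}{2}\,\frac{\alpha^{-s}}{1-\tfrac12\alpha^{-s}}\,g^*(s),
\]
on any strip where $g^*$ exists and $|\alpha^{-s}/2|<1$, i.e.\ $0<\Re(s)<1/d_F$. This uses $\alpha=q/4$ and $\alpha^{-1/d_F}=2$. The denominator vanishes on the line $\Re(s)=1/d_F$, at the points $s_m=1/d_F+2i\pi m/\log\alpha$. The residue at $s_0$ is proportional to $g^*(1/d_F)>0$, which gives the constant $C=\frac{L}{\log(1/\alpha)}\,g^*(1/d_F)$ up to the paper's normalisation.

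Shifting the contour to $\Re(s)=1/d_F+\varepsilon$ then gives $f(x)=x^{-1/d_F}\bigl(C+P(\log x)\bigr)+O(x^{-1/d_F-\varepsilon})$, where $P$ collects the residues at $s_m$, $m\neq0$. The factor $g^*(s_m)$ decays as $|m|\to\infty$, which I will check from the decay of $g^*$ on vertical lines; this follows because $g$ is smooth with integrable derivatives, so integration by parts gives $O(|s|^{-2})$. Strictly speaking, the statement "$\sim C x^{-1/d_F}$" must then be read as "of exact order $x^{-1/d_F}$ with mean constant $C$", exactly as in Theorem~\ref{th:no-jump}. Alternatively one notes that the oscillating amplitudes are small, and I will phrase the conclusion consistently with the deterministic theorem.

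\emph{Main step: behaviour of $g$ at infinity.} I need $g(x)=O(x^{-A})$ for some $A>1/d_F$; in fact faster than any power is expected. Small $W$ makes $G(u)$ decay slowly: $G(u)\ge \mathbb{P}(W\le 1/u)\cdot\frac12\ge \frac{c_1}{2}u^{-\beta}$. The upper bound is what matters. Split
\[
G(u)\le \mathbb{P}(W\le t)+\frac{1}{1+ut},
\]
and take $t=u^{-1/(1+\beta)}$ to get $G(u)\le C'u^{-\beta/(1+\beta)}=:C'u^{-\gamma}$ for large $u$. Let $K(x)=\#\{k:\alpha^kx\ge u_0\}\approx \log x/\log(1/\alpha)$. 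Then
\[
g(x)\le\prod_{k\le K(x)}C'(\alpha^kx)^{-\gamma}=\exp\!\Bigl(-\tfrac{\gamma}{2\log(1/\alpha)}(\log x)^2+O(\log x)\Bigr),
\]
which is a log-quadratic decay as in the earlier Lemma. This shows that $g^*(s)$ is analytic for all $\Re(s)>0$.

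At the left end, $\mathbb{E}[W^2]<\infty$ gives $G(u)=1-u\mathbb{E}W+O(u^2)$, hence $1-g(x)=O(x)$ near $0$. So $g^*$ is analytic on $\Re(s)>0$ with at most a simple pole at $0$, which is irrelevant here.

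The obstacle I expect is precisely the left-tail control of $W$. The upper bound $c_2t^\beta$ is what prevents an atom or heavy mass at $0$, which would make $G(u)\to\mathbb{P}(W=0)>0$ and destroy the decay of $g$. I would also verify that the lower bound $c_1t^\beta$ is not needed beyond ensuring $C>0$. Positivity of $C$ is automatic anyway, since $g>0$ implies $g^*(1/d_F)>0$.
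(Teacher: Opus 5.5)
Your proposal is correct, and its skeleton is the paper's: the expansion $G(u)=1-u\,\mathbb{E}[W]+O(u^2)$ for the limit at $0$, the identity $f^*(s)=L\frac{\alpha^{-s}/2}{1-\alpha^{-s}/2}g^*(s)$, and the pole at $s_0=1/d_F$ once $g^*$ is known to be holomorphic on $\Re(s)>0$. The genuine difference is how you prove the key lemma, the super-polynomial decay of $g$.

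The paper uses both small-ball bounds to get $G(u)=\Theta(u^{-\beta})$, by integration by parts against $\mathbb{P}(W\le t)$. It then runs a second Mellin argument on $\ell=\log g$ (a triple pole at $s=0$) to obtain $\ell(x)=-c(\log x)^2+O(\log x)$. You instead use the one-sided split $G(u)\le\mathbb{P}(W\le t)+(1+ut)^{-1}$ and bound the product directly.

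Your route is more elementary and more general. It never uses $c_1t^\beta$. In fact the same product argument only needs $G(u)\to0$, i.e.\ $\mathbb{P}(W=0)=0$: if $G\le\varepsilon$ for large $u$, then $g(x)=O(x^{-A})$ with $A=\log(1/\varepsilon)/\log(1/\alpha)$, and $A$ can be made arbitrarily large. Your route also sidesteps a soft spot in the paper. The paper infers a meromorphic continuation of $(\log G)^*$ past $\Re(s)=0$ from $\log G(u)=-\beta\log u+O(1)$. A bare $O(1)$ remainder does not give that continuation, whereas direct summation does give the $(\log x)^2$ law. What the paper's two-sided bound buys is the sharp constant $c=\beta/(2\log(1/\alpha))$, which the theorem does not need.

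Two small repairs.

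\emph{Limit at $0$.} In your first paragraph, $G(\alpha^j y)\to1$ for each $j$ does not by itself give $g(y)\to1$ for an infinite product. You need the summability $1-G(u)\le u\,\mathbb{E}[W]$, which you supply later, so invoke it at that point.

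\emph{Reading of $\sim$.} Your caveat is right: the poles $s_m$, $m\neq0$, generically produce a genuine log-periodic factor. The paper's proof silently drops it, although its appendix for Theorem~\ref{th:no-jump} keeps it as $1+P(\log x)$. However, do not appeal to the amplitudes being small. Since $2^{-k}=(\alpha^kx)^{1/d_F}x^{-1/d_F}$, you have exactly $x^{1/d_F}f(x)=L\sum_{k\ge1}(\alpha^kx)^{1/d_F}g(\alpha^kx)$. This differs by a super-polynomially small amount from $\Phi(x)=L\sum_{k\in\mathbb{Z}}(\alpha^kx)^{1/d_F}g(\alpha^kx)$. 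The function $\Phi$ is continuous, strictly positive and invariant under $x\mapsto\alpha x$. Its mean over a period in $\log x$ is $\frac{L}{\log(1/\alpha)}g^*(1/d_F)=C$. This gives $C+P(\log x)$ bounded below by a positive constant, hence the exact order $x^{-1/d_F}$, without any contour shift.
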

	
	The assumptions on $W$ exclude distributions with excessive mass at $0$, which would correspond to streets disappearing with positive probability. As long as $W$ does not concentrate too strongly near zero, the random modulation only affects the multiplicative constant, while the exponent remains unchanged. \\
	In particular, the large-$x$ asymptotics is still governed solely by the hyperfractal dimension $d_F$, showing that the geometry of the network dominates moderate random fluctuations. \\
	
	Typical examples include Gamma distributions, which naturally describe positive variations of street attractiveness and satisfy the required behaviour near zero.
	
	\begin{proof}
		Step 1: Behaviour near $x=0$.\\
		
		Recall that $G(u)=\mathbb{E}\!\left[(1+uW)^{-1}\right]$ with $W\ge0$ a.s.\ and $\mathbb{E}[W^2]<\infty$. 
		The identity $(1+uW)^{-1}=1-uW+u^2\frac{W^2}{1+uW}$ yields
		$G(u)=1-\mathbb{E}[W]\,u+O(u^2)$ as $u\to0$, hence
		$\log G(u)=-\mathbb{E}[W]\,u+O(u^2)$. 
		Therefore
		\[
		\log g(x)=\sum_{k\ge1}\log G(\alpha^k x)
		=-\mathbb{E}[W]\!\left(\sum_{k\ge1}\alpha^k\right)x+O(x^2)
		=O(x),
		\]
		so $g(x)=1+O(x)$ and
		\[
		f(x)=\sum_{k\ge1}\frac{L}{2^k}g(\alpha^k x)
		=L+O(x),
		\]
		which proves $\lim_{x\to0}f(x)=L$. \\
		
		Step 2: Large-$x$ behaviour of $g$.\\
		Let $\ell(x)=\log g(x)=\sum_{k\ge1}\log G(\alpha^k x)$ and recall that
		\[
		(\log G)^*(s)=\int_0^\infty \log G(u)\,u^{s-1}\,du \quad \text{ for } -1 < \Re(s) < 0
		\]
		By Mellin scaling, one has
		\[
		\ell^*(s)=\mathcal{M}[\ell](s)
		=\frac{\alpha^{-s}}{1-\alpha^{-s}}\,(\log G)^*(s).
		\]
		The assumption $c_1 t^\beta\le\mathbb{P}(W\le t)\le c_2 t^\beta$ near $t=0$
		implies 
		(see appendix for details)
		\begin{equation}\label{eq:asymptotics}
			\log G(u)=-\beta\log u+O(1), \qquad u\to\infty.
		\end{equation}
		Thus $(\log G)^*(s)$ has a double pole at $s=0$; combined with the simple pole of $(1-\alpha^{-s})^{-1}$ at $s=0$, this yields a triple pole of $\ell^*(s)$ at $0$. Mellin inversion gives
		\[
		\ell(x)=-c(\log x)^2+O(\log x), \qquad x\to\infty,
		\]
		for some $c>0$, hence $g(x)=e^{\ell(x)}$ decays faster than any power of $x$. 
		In particular, its Mellin transform
		\[
		g^*(s)=\int_0^\infty g(x)x^{s-1}dx
		\]
		is holomorphic in a half-plane $\Re(s)>0$, and in particular at $s_0:=1/d_F$.
		
		Thus, taking Mellin transforms in
		$f(x)=\sum_{k\ge1}\frac{L}{2^k}g(\alpha^k x)$ yields
		\[
		f^*(s)
		=L\,\frac{\alpha^{-s}/2}{1-\alpha^{-s}/2}\,g^*(s).
		\]
		Since $g^*(s)$ is holomorphic at $s_0$, the rightmost singularities of $f^*(s)$ come from the geometric factor $(1-\alpha^{-s}/2)^{-1}$, whose simple poles lie on the vertical line $\Re(s)=s_0$, in particular at $s=s_0=1/d_F$. 
		By Mellin inversion and the residue theorem, the dominant contribution to $f(x)$ as $x\to\infty$ is given by the residue at $s_0$, yielding
		\[
		f(x)\sim C\,x^{-1/d_F}
		\]
		for some constant $C>0$.
	\end{proof}
	
	\subsection{General framework}
	Here we want to generalize the hyperfractal properties beyond the Poisson hyperfractal pattern. In the following we will use the symbol $\Theta(.)$ to express the fact that a function is bounded from above and from below by another function up to multiplicative constants.
	
	The popup rate on a segment $\CS$ is $\lambda(\CS)=x\mu(\CS)$ for some $x>0$ where the $\mu(\CS)$ is normalized street weight. The densities $\mu(\CS)$ are no longer multiples of powers of some $(q/2)$. Let $y$ be a point on a street and $\nu$ be a real less than 1. We denote $\CS(y,\nu)$ the segment starting at $y$ that connects to the closest intersection with a street with normalized density greater than some given $\nu$. We suppose that the quantity $|\CS(y,\nu)|$ is now the distance to the intersection with a road with normalized density greater than some given $\nu$ ({\it i.e.} $\mu(\CS(y,\nu))>\nu$). We assume that the average $|\CS(y,\nu)|$ uniformly over the starting point $y$ is smaller than some $\Theta\left(\nu^{\frac{1}{d_F-1}}\right)$ where $d_F>2$ is the hyperfractal dimension (for the sequel we will drop the indication of the starting point $y$). It still implies that the network of streets of small pop-up rates tend to be dense when the popup rates tend to zero. 
	
	Enumerating the pop-up rates from the largest to the smallest, we then have, for all $i$: $\lambda_i=\lambda(\CS_i)$
	\begin{align}
		D(\CS_1,\ldots,\CS_k)=\sum_{i=0}^{i=k}\frac{1-e^{-\lambda_i|\CS_i|}}{\lambda_i}\prod_{j>i}e^{-\lambda_j|\CS_j|}
	\end{align}
	For a given sequence of decreasing rates $(\lambda_0,\lambda_1,\ldots,\lambda_k)$ the random variables $|S_i|$ are independent. Thus, conditioning on a fixed sequence of densities, we have
	\begin{align}
		\mathbb{E}[D(\CS_1,\ldots,\CS_k)] =  \sum_{i=0}^{k}\mathbb{E}\left[\frac{1-e^{-\lambda_i|\CS_i|}}{\lambda_i}\right]\prod_{j>i}\mathbb{E}[e^{-\lambda_j|\CS_j|}]\\
		\ge\mathbb{E}\left[\frac{1-e^{-\lambda_i|\CS_i|}}{\lambda_i}\right]\prod_{j>i}e^{-\lambda_j\mathbb{E}[|\CS_j|]},
	\end{align}
	for all $0\leq i \leq k$, due to the convexity of the exponential function.
	
	The algorithm for finding a parking space will be the following. We fix a constant parameter $\rho>1$. The car travelling on a street of popup rate $\lambda_i$ (resp. of normalized weight $\mu_i$) will turn at the first intersection with a street having a density $\lambda_{i-1}$ greater than $\rho\lambda_i$ (resp. $\mu_{i-1}>\rho\mu_i$). 
	Since the normalized weights are now random variables, we can define $P\left(\mu_{i-1}\in[\nu,\nu+d\nu]|\mu_{i}\right)$, denoted $p(\nu|\mu_{i})d\nu$ for brevity.
	
	\begin{align}
		f(x)=&\mathbb{E}[D(\CS_1,\ldots,\CS_k,\ldots)]\ge f_2(x)=\int_0^1 \mathbb{E}\left[\frac{1-e^{-x\mu_1|\CS_1|}}{x\mu_1}\right] p(\mu_1|\mu_2)d\mu_1\nonumber\\
		&\times\int_0^{\mu_1/\rho}e^{-x\mu_2\mathbb{E}[|\CS_2|]}p(\mu_2|\mu_3)d\mu_2\cdots\int_0^{\mu_{i-1}/\rho}e^{-x\mu_i\mathbb{E}[|\CS_i|]}p(\mu_i|\mu_{i+1})d\mu_i\cdots\label{eq-gg}
	\end{align}
	We define $f_2(x)$, by translating the $p(\mu_i|\mu_{i+1})$ terms:
	\begin{align*}
		f_2(x)=&\int_0^1\frac{E[1-e^{x\mu_1|S_1|}]}{x\mu_1}\frac{d\mu_1}{\mu_1}\\
		&\int_0^{\mu_1/\rho}e^{-x\mu_2E[|S_2|]}p(\mu_1|\mu_2)\mu_1\frac{d\mu_2}{\mu_2}\cdots\int_0^{\mu_{i-1}/\rho}e^{-x\mu_iE[|\CS_i|]}p(\mu_{i-1}|\mu_{i})\mu_{i-1}\frac{d\mu_i}{\mu_i}\cdots
	\end{align*}

	Let $\delta=\frac{1}{d_F-1}$. Using the general hyperfractal property the average distance $\mathbb{E}[\lambda_i|S_i|]$ should be $\Theta\left(x(\mu_i)^{\delta+1}\right)$ and also $\mathbb{E}[1-e^{-x\mu_i|S_i|}]=\Theta\left(\eta\left(x(\mu_i)^{\delta+1}\right)\right)$ for some bounded function $\eta$. And for $\mu>\rho\mu_i$  $P\left(\mu_{i-1}>\mu|\mu_i\right)=\Theta\left((\frac{\mu}{\mu_i})^{-\delta}\right)$ and $p(\mu|\mu_i)d\mu=\Theta\left((\frac{\mu}{\mu_i})^{-\delta}\right)\frac{d\mu}{\mu}$.
	
	
	\begin{theorem}
		We have $f(x)=\Omega(x^{-1/d_F})$.
	\end{theorem}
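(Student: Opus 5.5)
The plan is to bound $f(x)$ from below by a single well-chosen term of the lower bound \eqref{eq-gg}, namely the contribution of the segment whose weight is at the natural crossover scale. By the convexity inequality above, for every index $i$,
\[
f(x)\ \ge\ \mathbb{E}\left[\frac{1-e^{-x\mu_i|\CS_i|}}{x\mu_i}\right]\prod_{j>i}e^{-x\mu_j\mathbb{E}[|\CS_j|]},
\]
averaged over the random sequence of weights generated by the turning rule with parameter $\rho$. I will choose the critical weight $\nu_x$ by $x\nu_x^{\delta+1}=1$, i.e. $\nu_x=x^{-1/(\delta+1)}=x^{-(d_F-1)/d_F}$, since $\delta+1=d_F/(d_F-1)$. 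On a segment of weight $\nu\approx\nu_x$ the expected number of pop-ups $x\nu\,\mathbb{E}|\CS|=\Theta(x\nu^{\delta+1})$ is of order one. Hence, by the assumed form $\Theta(\eta(x\mu^{\delta+1}))$ with $\eta$ bounded away from $0$ at argument of order $1$, the first factor is $\Theta(1/(x\nu_x))=\Theta(x^{-1}x^{(d_F-1)/d_F})=\Theta(x^{-1/d_F})$. This is exactly the target exponent, so the theorem reduces to showing that the remaining factors are bounded below by a constant.

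For the prefix product over $j>i$, i.e. the weaker streets traversed before reaching level $\nu_x$, the weights are at most $\nu_x\rho^{-(j-i)}$ along the search, since each turn multiplies the weight by at least $\rho$. Using $\mathbb{E}|\CS_j|=O(\mu_j^{\delta})$, each exponent satisfies $x\mu_j\mathbb{E}|\CS_j|=O(x\mu_j^{\delta+1})=O(\rho^{-(\delta+1)(j-i)})$. This is a geometrically summable sequence, so
\[
\prod_{j>i}e^{-x\mu_j\mathbb{E}[|\CS_j|]}\ \ge\ \exp\Big(-C\sum_{m\ge1}\rho^{-(\delta+1)m}\Big)=c_\rho>0,
\]
uniformly in $x$. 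This is the analogue of the function $g$ being bounded below near $0$ in the jumpless model.

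The main obstacle is the randomness of the weight sequence: we must show that with probability bounded below, uniformly in $x$, some step $i$ of the search lands on a weight $\mu_i\in[\nu_x,\rho'\nu_x]$ for a fixed constant $\rho'$, rather than overshooting far above $\nu_x$, in which case the $1/(x\mu_i)$ factor would be too small. I would handle this with the transition law $p(\mu|\mu_i)\,d\mu=\Theta((\mu/\mu_i)^{-\delta})\,d\mu/\mu$. Take the last step $i$ with $\mu_i<\nu_x$; the next weight exceeds $\rho'\nu_x$ with probability $\Theta((\rho'\nu_x/\mu_i)^{-\delta})$. This quantity is small only when $\mu_i$ is comparable to $\nu_x$, and in that case segment $i$ itself already has $x\mu_i^{\delta+1}$ of order one. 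So in either case some segment has weight within a constant factor of $\nu_x$ with probability at least some $c>0$. Equivalently, in the rescaled form $f_2$ one substitutes $\mu_i=\nu_x t_i$ and checks that the resulting integral is a positive constant independent of $x$, by scale invariance of the kernel $(\mu/\mu_i)^{-\delta}d\mu/\mu$. Combining the three bounds gives $f(x)\ge c\,c_\rho\,x^{-1/d_F}$, i.e. $f(x)=\Omega(x^{-1/d_F})$.
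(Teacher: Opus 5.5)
Your argument is correct in outline and takes a genuinely different route from the paper's, but the overshoot step is argued incorrectly as written (repair below).

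\textbf{Comparison with the paper.} The paper keeps every term of the lower bound. It rewrites $f_2(x)$ as $\int_0^1 \frac{\eta(x\mu^{\delta+1})}{x\mu^2}\gamma(x\mu^{\delta+1})\,d\mu$, where a self-similar function $\gamma$ absorbs the prefix products and the weight chain after ``removing the $\Theta$ symbols''. It then proves super-exponential decay of $\gamma$ by analytically continuing its Laplace transform, and reads the exponent off the simple pole of $f_2^*(s)$ at $s=1/d_F$. This gives the two-sided order $f_2=\Theta(x^{-1/d_F})$, with an integral formula for the constant, in parallel with the Mellin analysis of the Manhattan model.

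You instead localize at the crossover scale $x\mu^{\delta+1}\asymp 1$. This is exactly where the paper's pole comes from: the substitution $u=x\mu^{\delta+1}$ turns $d\mu/(x\mu^2)$ into $x^{-1/d_F}u^{-1/(\delta+1)}\,du/((\delta+1)u)$. You then bound the prefix by a geometric series and handle the random weights by an overshoot estimate. This is more elementary and more robust:
\begin{itemize}
\item the $\Theta$-bounds are used only as uniform inequalities, so no constants pile up over the infinitely many nested integrals;
\item you only need $\eta$ bounded below on a compact interval, a nondegeneracy the paper also uses implicitly.
\end{itemize}
What you give up is the matching upper bound on $f_2$ and an explicit constant, and the $\Omega$ statement needs neither.

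\textbf{The overshoot step.} Because $i$ is the last step with $\mu_i<\nu_x$, you are conditioning on $\mu_{i-1}\ge\nu_x$. So the relevant quantity is not $\mathbb{P}(\mu_{i-1}>\rho'\nu_x\mid\mu_i)=\Theta((\rho'\nu_x/\mu_i)^{-\delta})$, but its ratio to $\mathbb{P}(\mu_{i-1}\ge\nu_x\mid\mu_i)$. Your dichotomy is also inverted: $(\rho'\nu_x/\mu_i)^{-\delta}$ is smallest when $\mu_i\ll\nu_x$, not when $\mu_i\asymp\nu_x$.

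Here is the correct version. Write the tail law as $C_1(\mu/\mu_i)^{-\delta}\le\mathbb{P}(\mu_{i-1}>\mu\mid\mu_i)\le C_2(\mu/\mu_i)^{-\delta}$ for $\mu>\rho\mu_i$.
\begin{itemize}
\item If $\mu_i\ge\nu_x/\rho$, segment $i$ itself lies in the window.
\item Otherwise $\nu_x>\rho\mu_i$, so the tail law applies at both levels $\nu_x$ and $\rho'\nu_x$. By the Markov property of the weight chain,
\[
\mathbb{P}(\mu_{i-1}>\rho'\nu_x \mid \mu_i,\ \mu_{i-1}\ge\nu_x)\le\frac{C_2(\rho'\nu_x/\mu_i)^{-\delta}}{C_1(\nu_x/\mu_i)^{-\delta}}=\frac{C_2}{C_1}(\rho')^{-\delta},
\]
uniformly in $\mu_i$ and $x$.
\end{itemize}
So what prevents overshooting is choosing $\rho'$ large enough that this bound is at most $1/2$, not $\mu_i\ll\nu_x$.

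The search starts from arbitrarily small weights, so it does cross $\nu_x$. Hence, with probability at least $1/2$, some segment has weight in $[\nu_x/\rho,\rho'\nu_x]$. Bound the weights preceding that window segment by $\rho'\nu_x\rho^{-m}$ rather than $\nu_x\rho^{-m}$. Your three estimates then combine to $f(x)\ge c\,x^{-1/d_F}$.
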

	\begin{proof}
		We have (removing the $\Theta$ symbols which will not affect the order of magnitude)
		\[
		f_2(x)=\int_0^{1}\frac{\eta(x\mu^{\delta+1})}{x\mu^2}\gamma(x\mu^{\delta+1})d\mu
		\]
		with 
		\begin{equation}
			\gamma(x)=\int_0^{1/\rho}p(1|\mu)e^{-x\mu^{\delta+1}}\gamma(x\mu^{\delta+1})\frac{d\mu}{\mu}
			\label{eqgamma}\end{equation}
		We can prove that the bounded function $\gamma(x)$ decays faster than any exponential function. To prove this let $\tgamma$ the Laplace transform of $\gamma(x)$:
		\[
		\tgamma(\omega)=\int_0^\infty e^{\omega x}\gamma(x)dx
		\]
		From~(\ref{eqgamma}) we get
		\begin{equation}
			\tgamma(\omega)=\int_0^{1/\rho}p(1|\mu)\tgamma((\omega-1)\mu^{\delta+1})\frac{d\mu}{\mu}
			\label{eqtgamma}\end{equation}
		The Laplace transform $\tgamma(\omega)$ is {\it a priori} defined for $\Re(\omega<0)$, but with the identity~(\ref{eqtgamma}) the domain of definition can be extended to $\Re(\omega)<1$, and thus can be further extended to $\Re(\omega)<2$, and so on, ultimately extending to the whole complex plane. 
		
		Taking this property, the function $h(x)=\eta(x)\gamma(x)$ decays more than exponentially and has a Mellin transform $h^*(s)$ defined for $\Re(s)>-1$. The Mellin transform of $f_2(x)$, $f_2^*(s)$ exist and satisfies:
		\[
		f_2^*(s)=h^*(s-1)\int_0^{1/\rho}\mu^{-(\delta+1)(s-1)}\frac{d\mu}{\mu^2}=\frac{h^*(s-1)}{(\delta+1)(s-1)+1}\rho^{(\delta+1)(s-1)+1}
		\]
		assuming $(\delta+1)(s-1)+1<0$. It has a simple pole at $s=1-\frac{1}{\delta+1}=\frac{1}{d_F}$, thus $f_2(x)=\Theta(x^{-1/d_F})$ and finally $f(x)=\Omega(x^{-1/d_F})$ because~(\ref{eq-gg}) is an inequality.
	\end{proof}
	
	
	\section{Conclusion and Perspectives}
	
	We have shown that the distance to the first available parking slot in a hyperfractal Manhattan network follows a scaling law of order $\lambda^{-1/d_F}$. The exponent depends only on the large-scale geometric structure of the street network, showing that geometry plays a central role in the search process.
	
	Several extensions can be considered. The linear exploration rule studied here provides a simple baseline, but more flexible search strategies allowing adaptive turns or partial backtracking could be analysed to understand their effect on the scaling constants and on the stability of the exponent. Another direction is to study heterogeneous cities made of districts with different hyperfractal parameters, and to analyse vehicles moving across regions with different effective dimensions. One may also introduce spatial correlations in slot availability, non-Poisson release processes, or time-dependent demand. Finally, considering multiple interacting vehicles would allow the study of congestion effects and competition for parking on hyperfractal networks.
	
	\newpage
	\bibliographystyle{plain}
	\bibliography{references}
	\newpage
	\section*{Appendix}
	\begin{proof}[ Proof of Theorem~\ref{th:no-jump}]
		We detail the asymptotics of $f$ for $\lambda\rightarrow +\infty$. As remarked in the proof, by~\eqref{f*}, the main singularities of $f^*$ are a simple pole at $s=1/d_F$, and a sequence of poles of form $1/d_F$ plus non zero integer multiples of
		$+2i\pi/\log\alpha$ the latter contributing to a periodic term of mean zero.
		\begin{align*}
			f(x)\underset{x \to +\infty}{\sim}-Res(f^*,1/d_F)x^{-1/d_F}(1+P(\log x))
		\end{align*}
		where $P(.)$ is a periodic function of mean zero and period $-\log\alpha$. Using that $(s-1/d_F)/(v(s)-v(1/d_F))\rightarrow 1/v'(1/d_F)$ as $s$ tends to $1/d_F$ with $v(s)=1-\alpha^{-s}/2$ and that, straightforwardly, $v'(1/d_F)=-\log(\alpha)$, 
		\begin{align*}
			Res(f^*,1/d_F)&=\lim_{s\rightarrow 1/d_F}(s-1/d_F)f^*(s)=\lim_{s\rightarrow 1/d_F}(s-1/d_F)\frac{L}{2}\frac{\alpha^{-s}}{1-\alpha^{-s}/2}g^*(s)\\
			&= -\frac{L}{2\log(\alpha)}\alpha^{-1/d_F}g^*(1/d_F)= -\frac{L}{\log(\alpha)}g^*(1/d_F)
		\end{align*}
	\end{proof}
	
	\begin{proof}[ Proof of equation~\eqref{eq:moment2Jumpless}]
		To this end, it suffices to take the second derivative of equation~\eqref{eq:LaplaceJumpless}. Denoting by  $a_i$ and $e_i$ functions of $s$ given by
		\begin{align}
			a_i(s)=\frac{\lambda_i}{\lambda_i+s},\quad  e_i(s)=\prod_{j\leq i}e^{-(\lambda_j+s)|\CS_j|}
		\end{align}
		and also denoting $\sigma_i=\sum_{j\leq i}|\CS_j|$,   equation~\eqref{eq:LaplaceJumpless}, giving the Laplace transform of $D(\CS_1,\ldots,\CS_k)$ $\varphi_k=\E\left(e^{-s D_k}/(\CS_1,\ldots,\CS_k)\right)$, can be rewritten as
		\begin{align*}
			\varphi_k=\sum_{i=1}^{k}a_i(e_{i-1}-e_i)+e_k.
		\end{align*}
		Using that $e'_i=-\sigma_i e_i$, differentiating the previous equation twice,
		\begin{align*}
			\varphi'_k=\sum_{i=1}^{k}a'_i(e_{i-1}-e_i)-\sum_{i=1}^{k}a_i(\sigma_{i-1}e_{i-1}-\sigma_i e_i)-\sigma_k e_k.
		\end{align*} 
		and 
		\begin{align}\label{eq:varphi''}
			\varphi''_k=\sum_{i=1}^{k}a''_i(e_{i-1}-e_i)-2\sum_{i=1}^{k}a'_i(\sigma_{i-1}e_{i-1}-\sigma_i e_i)+\sum_{i=1}^{k}a_i(\sigma_{i-1}^2 e_{i-1}-\sigma_i^2 e_i)+\sigma_k^2 e_k.
		\end{align} 
		Replacing $s$ by $0$ gives the result since  in the third sum in the right-hand side of~\eqref{eq:varphi''},  $a_i(0)=1$ for all $i$ and $\sigma(0)=0$. Thus $\varphi''_k(0)$ is reduced to the first two terms. This concludes the proof.
	\end{proof}
	
	\begin{proof}[ Proof of equation~\eqref{eq:asymptotics}.]
		Given that \[\displaystyle G(u) = \E\left[\frac{1}{1+uW}\right] = \int_0^{+\infty} \frac{1}{1+ut}f_W(t)dt = u\int_0^{+\infty} \frac{F(t)}{(1+ut)^2} dt\] via integration by parts where $F(t) = \mathbb{P}(W \leq t)$. 
		Consider $t_0 > 0$ such as $c_1 t^\beta\leq F(t) \leq c_2 t^\beta$ for $t \in (0, t_0)$. We have, for $u$  large enough, \[G(u) \geq u\int_0^{t_0} \frac{F(t)}{(1+ut)^2} dt \geq c_1u\int_0^{t_0} \frac{t^\beta}{(1+ut)^2} dt = c_1 u^{- \beta} \int_{0}^{ut_0} \frac{x^\beta}{(1+x)^2} dx \geq \int_{0}^{1} \frac{x^\beta}{(1+x)^2} dx \]
		For the majoration, we use split the integral, 
		$G(u) = \displaystyle u\int_0^{t_0} \frac{F(t)}{(1+ut)^2} dt +u\int_{t_0}^{+\infty} \frac{F(t)}{(1+ut)^2} dt$
		with $\displaystyle u\int_0^{t_0} \frac{F(t)}{(1+ut)^2} dt \leq c_2 u^{- \beta} \int_{0}^{ut_0} \frac{x^\beta}{(1+x)^2} dx \leq c_2 u^{- \beta} \int_{0}^{+\infty} \frac{x^\beta}{(1+x)^2} dx$  since the integral $\int_{0}^{+\infty} \frac{x^\beta}{(1+x)^2} dx$ converges for $\beta\in (0,1)$. 
		Moreover, because $F(t) \leq 1$,  \[u\int_{t_0}^{+\infty} \frac{F(t)}{(1+ut)^2} dt \leq u\int_{t_0}^{+\infty} \frac{1}{(1+ut)^2} dt = \frac{1}{1+ut_0} \leq  \frac{1}{ut_0} = O(u^{-\beta})\]
		as $u$ tends to $+\infty$.
	\end{proof}
\end{document}